\definecolor{navyblue}{RGB}{0,0,128}
\apptocmd{\lim}{\limits}{}{}
\newtheorem{thm}{Theorem}[section]
\newtheorem{lem}[thm]{Lemma}
\theoremstyle{definition}
\theoremstyle{remark}
\numberwithin{equation}{section}
\newcommand{\xdownarrow}[1]{
	{\left\downarrow\vbox to #1{}\right.\kern-\nulldelimiterspace}}
\begin{document}
		\title[]{Inverses of six classes of permutation polynomials of the form $x+\gamma\operatorname{Tr}_q^{q^2}(h(x))$ over finite fields of even characteristic} 
	\author[Rajesh P. Singh]{Rajesh P. Singh}
	\address{Department of Mathematics\\ Central University of South Bihar, Gaya, India}
	\email{rpsingh@cub.ac.in}
	
	\author[Dinesh Kumar]{Dinesh Kumar}
	\address{Department of Mathematics\\ Central University of South Bihar, Gaya, India}
	\email{kumardinesh63587@gmail.com}
	\author[Jitendra Prakash]{Jitendra Prakash}
	\address{Department of Mathematics\\ Central University of South Bihar, Gaya, India}
	\email{jitendraprakash96@gmail.com }
	\subjclass{11T06; 12E10}
	
	\keywords{Permutation polynomials; Compositional Inverse; Finite Fields}

	\begin{abstract}
	Recently, Jiang et al. \cite{JIANG2025102522} obtained several classes of Permutation Polynomials of the form $x+\gamma\operatorname{Tr}_q^{q^2}(h(x))$ over finite fields $\mathbb{F}_{q^2},q=2^n$. In this paper, we find the compositional inverses of six classes of permutation polynomials of this form. 
	\end{abstract}

	\maketitle

	\section{Introduction}
	Let \( q \) be a prime power and $\mathbb{F}_q$ be a finite field with $q$ elements. We call a polynomial  $f(x) \in \mathbb{F}_q[x]$ a \textit{permutation polynomial} (PP) of \( \mathbb{F}_q \) if the map \( f : a \mapsto f(a) \) from $\mathbb{F}_q$ to $\mathbb{F}_q$ is a bijection. Permutation polynomials garnered significant attention for over a century, dating back to the foundational work of Hermite and Dickson. There has been a significant progress in finding new classes of permutation polynomials over finite fields in the last two decades.

	It is well known that every mapping from $\mathbb{F}_{q}$ to $\mathbb{F}_{q}$ can be represented by a unique polynomial over $\mathbb{F}_{q}$ of degree at most
	$q-1$. So the inverse map of a permutation polynomial is given by a  polynomial over $\mathbb{F}_{q}$ of degree at most $q-1$. The unique polynomial $g(x)$ such that $f(g(x))=g(f(x))=x$ for each $x \in \mathbb{F}_{q}$ is called the \textit{compositional inverse} of $f(x)$.

	In 1991, Mullen \cite{Mullen G.L.} posed the problem of efficiently computing the coefficients of the compositional inverse of a permutation polynomial. While the existence of such an inverse is guaranteed by the bijective nature of $f(x)$, determining the explicit algebraic form of $g(x)$ is a difficult problem.  The Lagrange interpolation formula provides a theoretical solution but typically yields a polynomial with all possible terms, offering little insight into the algebraic structure and being highly computationally complex.  Explicitly obtaining the compositional inverse of a permutation polynomial is useful, as many applications require both permutation polynomials and their inverses. 
	For example, in Block ciphers like the Advanced Encryption Standard (AES), PPs and their inverses plays an essential role. The PPs and their inverses are also useful in public key cryptography  and coding theory \cite{rajesh1, rajesh2, ding2013cyclic, li2023further, hou1997results}.
	
	In recent years, the compositional inverses of several classes of permutation polynomials have been obtained. Q. Wang \cite{ wang2025survey} has surveyed the existing findings on compositional inverses of permutation polynomials over finite fields and summarized the main techniques used to study them. These techniques include the piecewise method, decomposition method, commutative diagram method, local method, and the experimental method.
	We refer the interested readers to 
	  \cite{doi:10.1142/S0219498822502206,  coulter2002compositional, li2019compositional, muratovic2007note, niu2021finding, tuxanidy2014inverses, tuxanidy2017compositional, wang2009inverse, wang2017note, wu2014compositional, wu2024permutation, yuan2022compositional, yuan2024local, zheng2018inverses, zheng2019inverses, zheng2023inverses,wu2025compositional2,wu2025compositional} for more details.

 In this paper, we find the compositional inverses of some permutation polynomials of the form $x+\gamma\operatorname{Tr}_q^{q^2}(h(x))$ by Jiang et al. \cite{JIANG2025102522}. In particular, we find the compositional inverses of the following classes of permutation polynomials. 
	\begin{thm}\cite{JIANG2025102522}
	Let $q = 2^m$ and 
	\(
	f_1(x) \;=\; x + \gamma \, \operatorname{Tr}_{q}^{q^2}(x^3 + x^{q+2}).
	\)
	Then $f_1(x)$ is a permutation polynomial of $\mathbb{F}_{q^2}$ if and only if $\gamma \in \mathbb{F}_q$.
\end{thm}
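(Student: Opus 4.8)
My plan is to reduce this (seemingly bivariate) permutation question to a one‑variable one over $\mathbb{F}_q$ by first simplifying the trace term. Write $T=\operatorname{Tr}_q^{q^2}$, so $T(z)=z+z^q$, and recall $z^{q^2}=z$ for $z\in\mathbb{F}_{q^2}$. Expanding and using $x^{q^2+2q}=x^{2q+1}$ gives
\[
T\bigl(x^3+x^{q+2}\bigr)=x^3+x^{q+2}+x^{3q}+x^{2q+1},
\]
and since we are in characteristic $2$ this equals $(x+x^q)\bigl(x^2+x^{2q}\bigr)=(x+x^q)^3=T(x)^3$. Hence the polynomial in question is just
\[
f_1(x)=x+\gamma\,T(x)^3 .
\]
I will also record the elementary fact, crucially using that $q$ is even, that $\ker T=\mathbb{F}_q$: for $z\in\mathbb{F}_q$ one has $T(z)=z+z=0$, and $\ker T$ has exactly $q$ elements.

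For the ``if'' direction, suppose $\gamma\in\mathbb{F}_q$. If $f_1(x)=f_1(y)$ then $x+y=\gamma\bigl(T(x)^3+T(y)^3\bigr)$, whose right‑hand side lies in $\mathbb{F}_q=\ker T$; applying $T$ gives $T(x)+T(y)=0$, so $T(x)=T(y)$, and then $x+y=\gamma\cdot 0=0$. Thus $f_1$ is injective on the finite set $\mathbb{F}_{q^2}$, hence a permutation polynomial.

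For the ``only if'' direction, suppose $\gamma\notin\mathbb{F}_q$, so that $a:=T(\gamma)\neq 0$; note $\gamma\neq 0$ as well. Let $b\in\mathbb{F}_q^{*}$ be the unique element with $b^2=a^{-1}$, take an arbitrary $x_0\in\mathbb{F}_q$, and set $y=x_0+\gamma b^3$. Then $T(x_0)=0$ while $T(y)=T(x_0)+b^3T(\gamma)=b^3a=b\,(b^2a)=b$, so $f_1(x_0)=x_0$ and $f_1(y)=\bigl(x_0+\gamma b^3\bigr)+\gamma b^3=x_0$; since $\gamma b^3\neq 0$ we have $x_0\neq y$, so $f_1$ is not injective. (Conceptually this is the AGW‑type commutative diagram $T\circ f_1=\bar f_1\circ T$ with $\bar f_1(u)=u+a u^3$ on $\mathbb{F}_q$, together with the observation that $\bar f_1$ has the two distinct roots $0$ and $a^{-1/2}$ and hence is not a permutation of $\mathbb{F}_q$.)

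The only step with any real content is the trace simplification $T(x^3+x^{q+2})=T(x)^3$; after that both implications are short and essentially forced. Accordingly, the main obstacle I anticipate is purely bookkeeping: matching the Frobenius exponents in the expansion of $(x+x^q)^3$ against those of $T(x^3+x^{q+2})$, and making sure that the identity $\ker T=\mathbb{F}_q$ (which drives the whole dichotomy) is invoked only where $q$ is even.
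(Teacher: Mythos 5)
Your proof is correct. Note first that the paper itself does not prove this statement: it is quoted from Jiang et al.\ \cite{JIANG2025102522} as background, so there is no in-paper proof to compare against line by line. That said, your key identity $\operatorname{Tr}_q^{q^2}(x^3+x^{q+2})=(\operatorname{Tr}_q^{q^2}(x))^3$ (valid as maps on $\mathbb{F}_{q^2}$, using $x^{q^2}=x$ and characteristic $2$) is exactly the computation the paper performs implicitly when it derives $f_1\circ T(x)=\operatorname{Tr}_q^{q^2}(\beta x)+\alpha\operatorname{Tr}_q^{q^2}(\delta x)+\gamma(\operatorname{Tr}_q^{q^2}(\delta x))^3$ in its Theorem 3.1, so your reduction is consistent with what the authors use downstream. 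Both directions of your argument check out: for $\gamma\in\mathbb{F}_q$ the difference $x+y=\gamma(T(x)^3+T(y)^3)$ lies in $\ker T=\mathbb{F}_q$, forcing $T(x)=T(y)$ and then $x=y$; for $\gamma\notin\mathbb{F}_q$ your explicit collision is verified by $T(x_0+\gamma b^3)=b^3T(\gamma)=b$ (using $b^q=b$) and $f_1(x_0+\gamma b^3)=x_0+\gamma b^3+\gamma b^3=x_0=f_1(x_0)$, and the equivalence $\gamma\in\mathbb{F}_q\Leftrightarrow T(\gamma)=0$ is correctly grounded in $\lvert\ker T\rvert=q$. This is a clean, self-contained proof of the cited result, and as a bonus it makes the form of the compositional inverse $f_1^{-1}(x)=x+\gamma(\operatorname{Tr}_q^{q^2}(x))^3$ obtained in the paper essentially transparent.
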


\begin{thm}\cite{JIANG2025102522}
	Let $q = 2^m$, 
	\(
	f_2(x) = x + \gamma \, \operatorname{Tr}_{q}^{q^2}(x + x^2 + x^3 + x^{q+2}).
	\)
	Then $f_2(x)$ permutes $\mathbb{F}_{q^2}$ if and only if $\gamma \in \mathbb{F}_q$.
\end{thm}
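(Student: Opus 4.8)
The plan is to first show that the inner trace collapses to a polynomial in $\operatorname{Tr}_q^{q^2}(x)$ alone. Set $T(x)=\operatorname{Tr}_q^{q^2}(x)=x+x^q$ and $N(x)=x^{q+1}\in\mathbb{F}_q$ (so that $x^2=T(x)x+N(x)$). Using $x^{q^2}=x$ together with the characteristic-$2$ identities $\operatorname{Tr}_q^{q^2}(x^2)=T(x)^2$, $\operatorname{Tr}_q^{q^2}(x^3)=T(x)^3+N(x)T(x)$, and $\operatorname{Tr}_q^{q^2}(x^{q+2})=x^{q+2}+x^{2q+1}=N(x)T(x)$, one gets
\[
\operatorname{Tr}_q^{q^2}\!\bigl(x+x^2+x^3+x^{q+2}\bigr)=T(x)+T(x)^2+T(x)^3 ,
\]
the two $N(x)T(x)$ contributions cancelling. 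Hence $f_2(x)=x+\gamma\,\phi(T(x))$ where $\phi(u)=u+u^2+u^3\in\mathbb{F}_q[u]$ and $\phi(T(x))\in\mathbb{F}_q$ for all $x$; this is exactly parallel to the identity $\operatorname{Tr}_q^{q^2}(x^3+x^{q+2})=T(x)^3$ behind the companion statement for $f_1$.

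Sufficiency ($\gamma\in\mathbb{F}_q\Rightarrow f_2$ is a PP) follows by a fibre argument for $T\colon\mathbb{F}_{q^2}\to\mathbb{F}_q$, whose kernel is $\mathbb{F}_q$. Since $\gamma\phi(T(x))\in\mathbb{F}_q=\ker T$ (recall $\operatorname{Tr}_q^{q^2}(c)=c+c^q=0$ for $c\in\mathbb{F}_q$ in characteristic $2$), we have $T(f_2(x))=T(x)$, so $f_2$ maps each fibre $L_c:=T^{-1}(c)$ into itself, acting there as the translation $x\mapsto x+\gamma\phi(c)$ by an element of $\mathbb{F}_q$; such a translation is a bijection of $L_c$, and since the $q$ fibres partition $\mathbb{F}_{q^2}$, $f_2$ is a permutation. (In fact $T(f_2(x))=T(x)$ also gives $f_2(f_2(x))=f_2(x)+\gamma\phi(T(x))=x$, so $f_2$ is an involution and equals its own compositional inverse.)

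For the converse, assume $\gamma\notin\mathbb{F}_q$, so $\{1,\gamma\}$ is an $\mathbb{F}_q$-basis of $\mathbb{F}_{q^2}$. Writing $x=a+b\gamma$ with $a,b\in\mathbb{F}_q$ gives $T(x)=b(\gamma+\gamma^q)=bs$ with $s:=T(\gamma)\in\mathbb{F}_q^{*}$ (nonzero exactly because $\gamma\notin\mathbb{F}_q$), hence $f_2(a+b\gamma)=a+\bigl(b+\phi(bs)\bigr)\gamma$. Thus $f_2$ permutes $\mathbb{F}_{q^2}$ if and only if $g(b):=b+\phi(bs)$ permutes $\mathbb{F}_q$, and the bijective substitution $v=sb$ turns this into the question of whether $\widetilde g(v):=v^3+v^2+wv$ permutes $\mathbb{F}_q$, where $w:=1+s^{-1}\in\mathbb{F}_q\setminus\{1\}$. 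If $w=0$ then $\widetilde g(v)=v^2(v+1)$, so $\widetilde g(0)=\widetilde g(1)=0$. If $w\neq0$ (which forces $q\ge4$), then for $v_1\neq v_2$ the equation $\widetilde g(v_1)=\widetilde g(v_2)$, after dividing by $v_1+v_2$ and setting $\sigma=v_1+v_2$, $\pi=v_1v_2$, becomes $\pi=\sigma^2+\sigma+w$; a collision exists if and only if some $\sigma\in\mathbb{F}_q^{*}$ makes $z^2+\sigma z+\pi$ split over $\mathbb{F}_q$, i.e. $\operatorname{Tr}_2^q(\pi/\sigma^2)=0$, which simplifies to $\operatorname{Tr}_2^q\bigl((1+\sqrt w)\,t\bigr)=\operatorname{Tr}_2^q(1)$ with $t=\sigma^{-1}$; since $1+\sqrt w\neq0$ and $q\ge4$, some $t\in\mathbb{F}_q^{*}$ works. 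Either way $\widetilde g$, hence $g$, hence $f_2$, is not injective, so $f_2$ is not a PP.

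The one genuinely non-routine step is recognising and checking the reduction $\operatorname{Tr}_q^{q^2}(x+x^2+x^3+x^{q+2})=T(x)+T(x)^2+T(x)^3$; once $f_2=x+\gamma\phi(T(x))$ is in hand, sufficiency is a short fibre argument and necessity an elementary $\mathbb{F}_q$-trace count. The only thing to watch is that the non-injectivity argument also covers the small fields, which it does, since $q=2$ forces $s=1$ and $w=0$, the easy subcase.
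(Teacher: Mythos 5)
The paper does not actually prove this statement: it is quoted verbatim from Jiang et al.\ and carried as a citation, so there is no in-paper argument to compare yours against. Your proof is correct and self-contained, and it is worth recording how it relates to what the paper does use. Your key reduction $\operatorname{Tr}_q^{q^2}(x+x^2+x^3+x^{q+2})=T(x)+T(x)^2+T(x)^3$ with $T(x)=x+x^q$ is exactly the identity the paper verifies (in the disguised form $f_2\circ T(x)=\operatorname{Tr}_q^{q^2}(\beta x)+\alpha\operatorname{Tr}_q^{q^2}(\delta x)+\gamma\{\operatorname{Tr}_q^{q^2}(\delta x)+(\operatorname{Tr}_q^{q^2}(\delta x))^2+(\operatorname{Tr}_q^{q^2}(\delta x))^3\}$) at the start of its computation of $f_2^{-1}$; your fibre argument then gives the permutation property for $\gamma\in\mathbb{F}_q$ essentially for free, and your observation that $f_2$ is an involution is independently confirmed by the paper's Theorem on $f_2^{-1}$, whose formula $x+\gamma\operatorname{Tr}_q^{q^2}(x)+\gamma(\operatorname{Tr}_q^{q^2}(x))^2+\gamma(\operatorname{Tr}_q^{q^2}(x))^3$ is precisely $f_2(x)$ rewritten via your reduction. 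The necessity direction (coordinates $x=a+b\gamma$, reduction to whether $v^3+v^2+wv$ with $w=1+T(\gamma)^{-1}\neq 1$ permutes $\mathbb{F}_q$, and the quadratic-splitting criterion $\operatorname{Tr}_2^q(\pi/\sigma^2)=0$) is sound, including the count showing a nonzero $t$ with $\operatorname{Tr}_2^q((1+\sqrt{w})t)=\operatorname{Tr}_2^q(1)$ exists once $q\ge 4$, and the $q=2$, $w=0$ degenerate case is handled separately. One presentational nit: you should state explicitly that $\phi(bs)\in\mathbb{F}_q$ so that $a+(b+\phi(bs))\gamma$ really is the coordinate decomposition of $f_2(a+b\gamma)$ in the basis $\{1,\gamma\}$; this is immediate but is the hinge of the necessity argument.
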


\begin{thm}\cite{JIANG2025102522}
	Let $q = 2^m$, 
	\(
	f_3(x) = x + \gamma \, \operatorname{Tr}_{q}^{q^2}(x + x^3 + x^{q+2}).
	\)
	Then $f_3(x)$ is a permutation polynomial of $\mathbb{F}_{q^2}$ if and only if one of the following holds:
	\begin{enumerate}
		\item[(a)] $\gamma \in \mathbb{F}_q$, if $m$ is even;
		\item[(b)] $\operatorname{Tr}_{q}^{q^2}(\gamma + \gamma^2) = 0$, if $m$ is odd.
	\end{enumerate}
\end{thm}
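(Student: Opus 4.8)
\emph{Proof proposal.} The plan is to collapse the two‑dimensional problem over $\mathbb{F}_{q^2}$ to a one‑dimensional one over $\mathbb{F}_q$. First I would simplify the trace term. Writing $t=\operatorname{Tr}_q^{q^2}(x)=x+x^{q}\in\mathbb{F}_q$ and $N=x^{q+1}\in\mathbb{F}_q$ and using $x^{q^{2}}=x$, one has
\[
\operatorname{Tr}_q^{q^2}\!\left(x+x^{3}+x^{q+2}\right)=(x+x^{q})+(x^{3}+x^{3q})+(x^{q+2}+x^{2q+1}).
\]
Now $x^{3}+x^{3q}=(x+x^{q})^{3}+x^{q+1}(x+x^{q})=t^{3}+Nt$ and $x^{q+2}+x^{2q+1}=x^{q+1}(x+x^{q})=Nt$ in characteristic $2$, so the two copies of $Nt$ cancel and the whole expression equals $t+t^{3}$. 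Hence $f_3(x)=x+\gamma\,\psi\!\left(\operatorname{Tr}_q^{q^2}(x)\right)$ with $\psi(t)=t+t^{3}$; in particular the trace term depends only on $\operatorname{Tr}_q^{q^2}(x)$.

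Next I would apply the standard reduction (an instance of the AGW criterion) for maps of the shape $x\mapsto x+\gamma\,\psi(\operatorname{Tr}_q^{q^2}(x))$: since $f_3$ acts on each fibre $\{x:\operatorname{Tr}_q^{q^2}(x)=t\}$ by the translation $x\mapsto x+\gamma\psi(t)$, which carries this fibre bijectively onto the fibre labelled $t+\operatorname{Tr}_q^{q^2}(\gamma\psi(t))=t+\operatorname{Tr}_q^{q^2}(\gamma)\,\psi(t)$ (using $\psi(t)\in\mathbb{F}_q$ and $\mathbb{F}_q$‑linearity of $\operatorname{Tr}_q^{q^2}$), the polynomial $f_3$ permutes $\mathbb{F}_{q^2}$ if and only if the induced map $g(t):=t+\operatorname{Tr}_q^{q^2}(\gamma)\,\psi(t)$ permutes $\mathbb{F}_q$. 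Setting $a:=\operatorname{Tr}_q^{q^2}(\gamma)=\gamma+\gamma^{q}\in\mathbb{F}_q$, this induced map is $g(t)=a t^{3}+(1+a)t$.

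Then I would determine exactly when $g$ permutes $\mathbb{F}_q=\mathbb{F}_{2^{m}}$. If $a=0$, then $g(t)=t$ is a permutation. If $a\neq0$, write $g(t)=a\,(t^{3}+bt)$ with $b=a^{-1}+1$; when $b\neq0$, letting $c\in\mathbb{F}_q^{*}$ be the square root of $b$ gives $g(t)=a\,t(t+c)^{2}$, which vanishes at the two distinct points $t=0$ and $t=c$, so $g$ is not injective. The only remaining subcase is $b=0$, i.e.\ $a=1$, where $g(t)=t^{3}$, a permutation of $\mathbb{F}_{2^{m}}$ if and only if $\gcd(3,2^{m}-1)=1$, i.e.\ if and only if $m$ is odd. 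Collecting the cases: $g$ permutes $\mathbb{F}_q$ if and only if $a=0$, or $a=1$ with $m$ odd.

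Finally I would translate the condition on $a=\operatorname{Tr}_q^{q^2}(\gamma)$ back to $\gamma$. We have $a=0\iff\gamma^{q}=\gamma\iff\gamma\in\mathbb{F}_q$, which gives part (a) when $m$ is even (the option $a=1$ being unavailable in that case). When $m$ is odd, $a\in\{0,1\}\iff a^{2}=a\iff\operatorname{Tr}_q^{q^2}(\gamma)^{2}+\operatorname{Tr}_q^{q^2}(\gamma)=0\iff\operatorname{Tr}_q^{q^2}(\gamma^{2}+\gamma)=0$, since the Frobenius commutes with $\operatorname{Tr}_q^{q^2}$; this is precisely part (b). The step I expect to be the main obstacle is the first one: spotting that the characteristic‑$2$ cancellations collapse the trace term to $t+t^{3}$, so that the problem becomes genuinely one‑dimensional. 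Once that identity is in place, the reduction and the cubic analysis are routine.
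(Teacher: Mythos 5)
Your proposal is correct, but there is nothing in the paper to compare it against: the paper does not prove this statement, it simply quotes it from Jiang et al.\ \cite{JIANG2025102522}, and its own contribution concerning $f_3$ is limited to computing the compositional inverse in Section~3. Your argument is a complete, self-contained proof. The key identity $\operatorname{Tr}_{q}^{q^2}(x+x^3+x^{q+2})=\operatorname{Tr}_{q}^{q^2}(x)+(\operatorname{Tr}_{q}^{q^2}(x))^3$ is verified correctly (the two copies of $x^{q+1}(x+x^q)$ cancel in characteristic $2$), and it matches what the paper implicitly confirms in equation~\ref{3a}, where $f_3\circ T(x)$ collapses to $T(x)+\gamma\{\operatorname{Tr}_{q}^{q^2}(\delta x)+(\operatorname{Tr}_{q}^{q^2}(\delta x))^3\}$. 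The commutative-diagram reduction $\operatorname{Tr}_{q}^{q^2}\circ f_3=g\circ\operatorname{Tr}_{q}^{q^2}$ with $g(t)=at^3+(1+a)t$, $a=\operatorname{Tr}_{q}^{q^2}(\gamma)$, is sound in both directions (injectivity on fibres is automatic since $f_3$ acts there by translation, and surjectivity of $g$ follows from surjectivity of the trace). The case analysis of $g$ is also right: for $a\notin\{0,1\}$ the factorization $t^3+bt=t(t+c)^2$ with $c^2=b\neq 0$ exhibits two roots, and for $a=1$ one needs $\gcd(3,2^m-1)=1$, i.e.\ $m$ odd; converting $a\in\{0,1\}$ into $\operatorname{Tr}_{q}^{q^2}(\gamma+\gamma^2)=0$ via $\operatorname{Tr}_{q}^{q^2}(\gamma)^2=\operatorname{Tr}_{q}^{q^2}(\gamma^2)$ gives exactly parts (a) and (b). This is essentially the standard AGW-type argument one would expect for the original theorem, and it would make the present paper more self-contained if included.
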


\begin{thm}\cite{JIANG2025102522}
	Let $q = 2^m$, 
	\(
	f_4(x) = x + \gamma \, \operatorname{Tr}_{q}^{q^2}(x^2 + x^3 + x^{q+2}).
	\)
	Then $f_4(x)$ permutes $\mathbb{F}_{q^2}$ if and only if
	\begin{enumerate}
		\item[(a)] $\gamma \in \mathbb{F}_q$, if $m$ is even;
		\item[(b)] $\operatorname{Tr}_{q}^{q^2}(\gamma + \gamma^2) = 0$, if $m$ is odd.
	\end{enumerate}
\end{thm}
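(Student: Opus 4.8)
The plan is to reduce the permutation property of $f_4$ on $\mathbb{F}_{q^2}$ to a univariate permutation question on $\mathbb{F}_q$ by the commutative‑diagram method, using $\operatorname{Tr}:=\operatorname{Tr}_q^{q^2}$ as the projection onto $\mathbb{F}_q$. First I would simplify the trace term. Since $x^{q+2}=x^q x^2$, we have $x^2+x^3+x^{q+2}=x^2\bigl(1+x+x^q\bigr)=x^2\bigl(1+\operatorname{Tr}(x)\bigr)$, and because $1+\operatorname{Tr}(x)\in\mathbb{F}_q$ while $\operatorname{Tr}(x^2)=(x+x^q)^2=\operatorname{Tr}(x)^2$, one obtains
\[
\operatorname{Tr}\bigl(x^2+x^3+x^{q+2}\bigr)=\bigl(1+\operatorname{Tr}(x)\bigr)\operatorname{Tr}(x)^2=\operatorname{Tr}(x)^2+\operatorname{Tr}(x)^3 .
\]
Hence, writing $t=\operatorname{Tr}(x)$, we get $f_4(x)=x+\gamma\,(t^2+t^3)$, with the correction term lying in $\gamma\mathbb{F}_q$. (The same computation reduces $f_1,f_2,f_3$ to the shapes $x+\gamma t^3$, $x+\gamma(t+t^2+t^3)$, $x+\gamma(t+t^3)$, which is a good consistency check on the method.)

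Next I would prove the reduction: $f_4$ permutes $\mathbb{F}_{q^2}$ if and only if $\bar f(t):=t+\delta\,(t^2+t^3)$ permutes $\mathbb{F}_q$, where $\delta:=\operatorname{Tr}(\gamma)=\gamma+\gamma^q\in\mathbb{F}_q$. Applying $\operatorname{Tr}$ to $f_4$ gives $\operatorname{Tr}(f_4(x))=t+\delta(t^2+t^3)=\bar f(\operatorname{Tr}(x))$, so $\operatorname{Tr}\circ f_4=\bar f\circ\operatorname{Tr}$; on each fibre $\operatorname{Tr}^{-1}(c)$ the quantity $t$ is the constant $c$, so $f_4$ restricts there to the translation $x\mapsto x+\gamma(c^2+c^3)$, which is injective. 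Since $\operatorname{Tr}$ is onto $\mathbb{F}_q$ with all fibres of the same size $q$, the AGW/commutative‑diagram criterion yields the claimed equivalence. (If one prefers to avoid quoting AGW: from $f_4(x)=f_4(y)$ one gets $x-y\in\gamma\mathbb{F}_q$, and applying $\operatorname{Tr}$ forces $\bar f(\operatorname{Tr}(x))=\bar f(\operatorname{Tr}(y))$, whence the equivalence follows in two lines in both directions.)

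The last step is to decide when $\bar f(t)=\delta t^3+\delta t^2+t$ permutes $\mathbb{F}_q=\mathbb{F}_{2^m}$. Replacing $t$ by $t+1$ — a bijection of $\mathbb{F}_q$ that in characteristic $2$ cancels the quadratic term — turns $\bar f$ into $\delta t^3+(\delta+1)t+1$, so $\bar f$ permutes $\mathbb{F}_q$ iff $g(t):=\delta t^3+(\delta+1)t=t\bigl(\delta t^2+(\delta+1)\bigr)$ does. If $\delta=0$ then $g(t)=t$, a permutation; if $\delta=1$ then $g(t)=t^3$, which permutes $\mathbb{F}_{2^m}$ iff $\gcd(3,2^m-1)=1$, i.e.\ iff $m$ is odd; and if $\delta\notin\{0,1\}$ then $g$ vanishes both at $0$ and at the unique nonzero $t$ with $t^2=(\delta+1)/\delta$, so $g$ is not injective and $\bar f$ is not a permutation. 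Thus $f_4$ permutes $\mathbb{F}_{q^2}$ exactly when $\delta=0$, or when $m$ is odd and $\delta=1$. Translating back: $\delta=0\iff\gamma^q=\gamma\iff\gamma\in\mathbb{F}_q$, which gives (a) for even $m$; and for odd $m$, the condition $\delta\in\{0,1\}$ is equivalent to $\delta^2+\delta=0$, i.e.\ to $\bigl(\gamma+\gamma^2\bigr)+\bigl(\gamma+\gamma^2\bigr)^q=\operatorname{Tr}_q^{q^2}(\gamma+\gamma^2)=0$, which is (b).

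I expect the only real friction to be in the reduction step — checking fibre‑injectivity and commutativity carefully so that the criterion applies cleanly — together with executing the characteristic‑$2$ change of variable correctly. Once $g=\delta t^3+(\delta+1)t$ is in hand, the "two distinct roots" observation disposes of every $\delta\notin\{0,1\}$ at once, and the two remaining cases are the classical facts about the cube map on $\mathbb{F}_{2^m}$, so no further obstacle is anticipated.
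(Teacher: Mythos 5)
Your proof is correct, but note that the paper you are being compared against does not actually prove this statement: it is quoted verbatim from Jiang et al.\ \cite{JIANG2025102522} as background, and the paper's own work begins only with the compositional inverses in Section~3. So there is no in-paper argument to match; what can be said is that your key simplification $\operatorname{Tr}_q^{q^2}(x^2+x^3+x^{q+2})=\operatorname{Tr}_q^{q^2}(x)^2+\operatorname{Tr}_q^{q^2}(x)^3$ is exactly the identity the authors verify implicitly when they compute $f_4\circ T(x)$ in equation~(3.26), and your reduction $\operatorname{Tr}\circ f_4=\bar f\circ\operatorname{Tr}$ with translation on fibres is the standard additive-analogue-of-AGW route that one would expect the source paper to use. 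Checking your steps: the factorization $x^2+x^3+x^{q+2}=x^2(1+\operatorname{Tr}(x))$ and the $\mathbb{F}_q$-linearity of the trace give the stated reduction to $\bar f(t)=t+\delta(t^2+t^3)$ with $\delta=\gamma+\gamma^q$; the fibre maps are translations, so the equivalence with $\bar f$ permuting $\mathbb{F}_q$ is sound (and your two-line fallback argument also works, using that on a finite set non-injective equals non-surjective); the shift $t\mapsto t+1$ correctly kills the quadratic term in characteristic $2$, giving $g(t)=\delta t^3+(\delta+1)t$; the case analysis on $\delta$ is complete, including the observation that for $\delta\notin\{0,1\}$ the factor $\delta t^2+\delta+1$ has a unique nonzero root because squaring is bijective; and the final translation $\delta\in\{0,1\}\iff\delta^2+\delta=0\iff\operatorname{Tr}_q^{q^2}(\gamma+\gamma^2)=0$, together with $3\mid 2^m-1$ exactly when $m$ is even, recovers both cases (a) and (b). I see no gap.
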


\begin{thm}\cite{JIANG2025102522}
	Let $m$ be an odd integer and $q = 2^m$. Then $f_5(x) = x + \gamma\operatorname{Tr}_q^{q^2}(x + x^{q+2})$
	permutes $\mathbb{F}_{q^2}$ if and only if $\gamma \in \mathbb{F}_{2^2} \setminus\{1\}$.
\end{thm}
\begin{thm}\cite{JIANG2025102522}
	Let $m$ be an odd integer and $q = 2^m$. Then $f_6(x) = x + \gamma \operatorname{Tr}_q^{q^2}(x^2 + x^{q+2})$
	permutes $\mathbb{F}_{q^2}$ if and only if $\gamma \in \mathbb{F}_{2^2} \setminus\{1\}$.
\end{thm}

In Section 2, we present the auxiliary results required for the paper. In section 3, we present the compositional inverses of the above listed permutation polynomials.
	
	\section{Auxiliary results}
Throughout the paper we take $q=2^m$. Let ${\mbox{Tr}}_{q}^{q^2}(\alpha x)=\alpha x+\alpha^{q}x^{q}\in \mathbb{F}_{q^{2}}[x]$ be a linearized polynomial. We can easily see that the mapping from $(\mathbb{F}_{q^{2}}, +)$ to $(\mathbb{F}_{q}, +)$ given by ${\mbox{Tr}}_{q}^{q^2}(\alpha x)$ is a surjective homomorphism. Clearly, $\Large({\mbox{Tr}}_{q}^{q^2}(\alpha x)\Large)^{q}={\mbox{Tr}}_{q}^{q^2}(\alpha x)$ for each $x \in \mathbb{F}_{q^{2}}$.

Next two Lemmas are used in the sequel.
	\begin{lem} \label{lem0}
	Let $\mathbb{F}_{2^{m}}$ be a finite field with $m$ an odd number. Then	$f(x)=x^3$ is a permutation polynomial over $\mathbb{F}_{2^{m}}$, and its inverse $f^{-1}(x)=x^{t}$ where $3t\equiv 1\mod{(2^m-1)}$.
	\end{lem}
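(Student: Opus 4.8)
The plan is to reduce everything to the standard criterion for monomial permutations: for a prime power $Q$, the map $x\mapsto x^{d}$ permutes $\mathbb{F}_{Q}$ if and only if $\gcd(d,Q-1)=1$. With $Q=2^{m}$ and $d=3$, the first assertion therefore amounts to checking that $\gcd(3,2^{m}-1)=1$ whenever $m$ is odd. This is a one-line computation: since $2\equiv -1\pmod 3$, for odd $m$ we get $2^{m}\equiv(-1)^{m}\equiv -1\pmod 3$, so $2^{m}-1\equiv -2\equiv 1\pmod 3$, which gives $3\nmid 2^{m}-1$ and hence $\gcd(3,2^{m}-1)=1$. (Note this is exactly where oddness of $m$ is used; for even $m$ one has $3\mid 2^{m}-1$.) Thus $f(x)=x^{3}$ is a permutation polynomial of $\mathbb{F}_{2^{m}}$.

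Next I would produce the inverse explicitly. Because $\gcd(3,2^{m}-1)=1$, the congruence $3t\equiv 1\pmod{2^{m}-1}$ admits a solution $t$, unique modulo $2^{m}-1$, which we may normalize so that $1\le t\le 2^{m}-1$. Writing $3t=1+k(2^{m}-1)$ for some integer $k\ge 0$, I would verify that $x^{t}$ is the compositional inverse by a direct substitution: for $x\in\mathbb{F}_{2^{m}}^{*}$ we have $x^{2^{m}-1}=1$, so $\big(f(x)\big)^{t}=x^{3t}=x\cdot\big(x^{2^{m}-1}\big)^{k}=x$, and the case $x=0$ is handled separately since $0^{t}=0$ (using $t\ge 1$). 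The same computation gives $f(x^{t})=x^{3t}=x$, so $f^{-1}(x)=x^{t}$ with $3t\equiv 1\pmod{2^{m}-1}$, as claimed.

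There is no real obstacle here: the proof is entirely routine, and the only two points needing any care at all are the elementary number-theoretic fact $\gcd(3,2^{m}-1)=1$ for odd $m$ and the trivial but necessary separate treatment of $x=0$ when checking that $x^{t}$ inverts $x^{3}$.
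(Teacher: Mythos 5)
Your proof is correct and follows the same route as the paper, which simply cites the fact that $\gcd(3,2^m-1)=1$ for odd $m$ and declares the rest trivial. You merely spell out the details (the computation modulo $3$ and the direct verification that $x^t$ inverts $x^3$) that the paper omits.
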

	\begin{proof}
		The proof is trivial by noting the fact that $\gcd(3,2^m-1)=1$ for an odd integer $m$. 
	\end{proof}

	\begin{lem}\label{lem1}
		 Let $q=2^m$ and $\beta,\delta\in\mathbb{F}_{q^2}$ be such that $\beta\neq c\delta$ for any $c\in \mathbb{F}^{*}_q$. Then $\beta\delta^{q}+\delta\beta^{q}\neq0$.
	\end{lem}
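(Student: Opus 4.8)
The plan is to prove the contrapositive: assuming $\beta\delta^{q}+\delta\beta^{q}=0$, I will exhibit some $c\in\mathbb{F}_{q}^{*}$ with $\beta=c\delta$. I will work with $\beta\neq 0$ and $\delta\neq 0$, which is the setting in which the statement is meaningful: if, say, $\delta=0$, then $\beta\delta^{q}+\delta\beta^{q}=0$ automatically, so the hypothesis is tacitly that both $\beta$ and $\delta$ are nonzero (equivalently, that $\beta$ and $\delta$ are linearly independent over $\mathbb{F}_{q}$).

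First I would record the observation that is really the crux: for arbitrary $\beta,\delta\in\mathbb{F}_{q^{2}}$ the element $u:=\beta\delta^{q}+\delta\beta^{q}$ lies in $\mathbb{F}_{q}$. Indeed, using $\beta^{q^{2}}=\beta$ and $\delta^{q^{2}}=\delta$,
\[
u^{q}=\beta^{q}\delta^{q^{2}}+\delta^{q}\beta^{q^{2}}=\beta^{q}\delta+\delta^{q}\beta=u,
\]
so $u$ is fixed by the Frobenius automorphism $x\mapsto x^{q}$ of $\mathbb{F}_{q^{2}}$, whose fixed field is exactly $\mathbb{F}_{q}$. (Equivalently, $u=\operatorname{Tr}_{q}^{q^{2}}(\beta\delta^{q})$.)

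Next, from $u=0$ together with the fact that the characteristic is $2$ we get $\beta\delta^{q}=\delta\beta^{q}$. Dividing both sides by $\delta\delta^{q}\neq 0$ (this is where $\delta\neq 0$ is used) yields $\beta/\delta=\beta^{q}/\delta^{q}=(\beta/\delta)^{q}$, so $\beta/\delta$ is again fixed by $x\mapsto x^{q}$ and hence $\beta/\delta\in\mathbb{F}_{q}$. Since $\beta\neq 0$, in fact $\beta/\delta\in\mathbb{F}_{q}^{*}$; putting $c:=\beta/\delta$ we obtain $\beta=c\delta$ with $c\in\mathbb{F}_{q}^{*}$, contradicting the hypothesis. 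This proves the lemma.

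There is no serious obstacle here; the only point that needs a moment's thought is recognizing that $\beta\delta^{q}+\delta\beta^{q}$ automatically belongs to $\mathbb{F}_{q}$, after which the statement reduces to the characterization of $\mathbb{F}_{q}$ as the fixed field of the Frobenius applied to $\beta/\delta$. It is worth carrying out the computation $u^{q}=u$ explicitly, since the same identity — and the equivalence $u=0\iff\beta/\delta\in\mathbb{F}_{q}$ — is precisely what makes the manipulations with $\operatorname{Tr}_{q}^{q^{2}}$ in Section 3 go through, so this lemma is best viewed as the non-degeneracy fact underlying those arguments.
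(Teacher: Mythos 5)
Your proof is correct and takes essentially the same route as the paper's: both reduce $\beta\delta^{q}+\delta\beta^{q}=0$ to $\beta\delta^{-1}=(\beta\delta^{-1})^{q}$ and conclude $\beta\delta^{-1}\in\mathbb{F}_{q}$ from the fixed field of the Frobenius. Your remark that $\beta,\delta\neq 0$ must be tacitly assumed is a fair point (the paper also divides by $\delta$ without comment, and the hypothesis $\beta,\delta\in\mathbb{F}^{*}_{q^2}$ is made explicit only in the subsequent lemma), and the observation that $\beta\delta^{q}+\delta\beta^{q}=\operatorname{Tr}_{q}^{q^2}(\beta\delta^{q})\in\mathbb{F}_{q}$ is correct but not needed here.
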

	\begin{proof}
		If $\beta\delta^{q}+\delta\beta^{q}=0$, then we have
		$\beta\delta^{-1}={(\beta\delta^{-1})}^{q}$. Equivalently, we have $\beta\delta^{-1}=c\in\mathbb{F}_q$ or $\beta=c\delta$, a contradiction.
	\end{proof}
The following Lemma is crucial in finding the compositional inverses of permutation polynomials in the next Section.
	\begin{lem}\label{lem2}
		Let $q=2^m$ and $\alpha\in\mathbb{F}_{q^2}\setminus\mathbb{F}_q$ with $\operatorname{Tr}_q^{q^2}(\alpha)=1$.  Let $\beta, \delta \in \mathbb{F}^{*}_{q^2}$ be such that $\beta \neq c\delta$ for any $c \in \mathbb{F}^{*}_{q}$. 
		Then 
		$T(x)=\operatorname{Tr}_{q}^{q^2}(\beta x)+\alpha\operatorname{Tr}_{q}^{q^2}(\delta x)$ is a permutation polynomial over $\mathbb{F}_{q^2}$ and its inverse is
		$$T^{-1}(x)=(\beta\delta^{q}+\delta\beta^{q})^{-1}\{\delta^{q}\operatorname{Tr}_{q}^{q^2}((1+\alpha)x)+\beta^{q}\operatorname{Tr}_{q}^{q^2}(x)\}.$$
	\end{lem}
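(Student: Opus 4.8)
The plan is to use that $T$ is an additive (indeed $\mathbb{F}_q$-linear) map of $\mathbb{F}_{q^2}$, so that "permutation polynomial" is equivalent to "injective", and to verify the displayed candidate $g$ by producing $x$ explicitly from $y=T(x)$; this single computation simultaneously gives injectivity (hence bijectivity on the finite set $\mathbb{F}_{q^2}$) and identifies $g$ as $T^{-1}$. First I would record the elementary facts I will lean on: since $\alpha\in\mathbb{F}_{q^2}\setminus\mathbb{F}_q$ with $\operatorname{Tr}_q^{q^2}(\alpha)=1$ we have $\alpha+\alpha^q=1$, equivalently $\alpha^q=1+\alpha$ and $1+\alpha^q=\alpha$, and $\{1,\alpha\}$ is an $\mathbb{F}_q$-basis of $\mathbb{F}_{q^2}$; for any $c\in\mathbb{F}_q$ one has $\operatorname{Tr}_q^{q^2}(c)=c+c^q=2c=0$, while $\operatorname{Tr}_q^{q^2}(\beta x),\operatorname{Tr}_q^{q^2}(\delta x)\in\mathbb{F}_q$ for all $x$ (noted already in the excerpt); and by Lemma \ref{lem1} the hypothesis $\beta\neq c\delta$ for $c\in\mathbb{F}_q^{*}$ guarantees $\beta\delta^q+\delta\beta^q\neq 0$, so the scalar $(\beta\delta^q+\delta\beta^q)^{-1}$ appearing in the statement is well defined.

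For the permutation property I would set $u:=\operatorname{Tr}_q^{q^2}(\beta x)=\beta x+\beta^q x^q$ and $v:=\operatorname{Tr}_q^{q^2}(\delta x)=\delta x+\delta^q x^q$, so that $T(x)=u+\alpha v$ with $u,v\in\mathbb{F}_q$. If $T(x)=0$, then since $\{1,\alpha\}$ is an $\mathbb{F}_q$-basis we get $u=v=0$, i.e. the homogeneous linear system $\beta x+\beta^q x^q=0$, $\delta x+\delta^q x^q=0$ in the two quantities $x,x^q$; its determinant is $\beta\delta^q+\delta\beta^q\neq 0$, so $x=x^q=0$. Thus $T$ is injective on $\mathbb{F}_{q^2}$, hence a permutation polynomial.

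For the inverse I would take $y=T(x)=u+\alpha v$ and apply $\operatorname{Tr}_q^{q^2}$ to it: using $\operatorname{Tr}_q^{q^2}(u)=0$ and $\operatorname{Tr}_q^{q^2}(\alpha v)=(\alpha+\alpha^q)v=v$, this yields $v=\operatorname{Tr}_q^{q^2}(y)$. Then $u=y+\alpha v=y+\alpha\operatorname{Tr}_q^{q^2}(y)$, and the identity $1+\alpha^q=\alpha$ lets me rewrite $y+\alpha(y+y^q)=(1+\alpha)y+\alpha y^q=(1+\alpha)y+(1+\alpha^q)y^q=\operatorname{Tr}_q^{q^2}((1+\alpha)y)$, so $u=\operatorname{Tr}_q^{q^2}((1+\alpha)y)$. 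Solving the (now inhomogeneous) system $\beta x+\beta^q x^q=u$, $\delta x+\delta^q x^q=v$ by eliminating $x^q$ gives $x=(\beta\delta^q+\delta\beta^q)^{-1}(\delta^q u+\beta^q v)$, and substituting the expressions just obtained for $u$ and $v$ produces exactly $g(y)$; if a self-contained check is wanted one can also verify $T(g(x))=x$ directly by the same basis argument.

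\emph{On the difficulty:} there is no substantive obstacle here — the whole argument is linear algebra over $\mathbb{F}_{q^2}$ regarded as a rank-$2$ $\mathbb{F}_q$-space, with Lemma \ref{lem1} supplying the single non-degeneracy fact needed. The only steps requiring care are the characteristic-$2$ bookkeeping with $\operatorname{Tr}_q^{q^2}$ (that it annihilates $\mathbb{F}_q$ and that $\alpha+\alpha^q=1$) and the rewriting of $y+\alpha\operatorname{Tr}_q^{q^2}(y)$ as $\operatorname{Tr}_q^{q^2}((1+\alpha)x)$, which is precisely what puts the final formula in the stated symmetric form.
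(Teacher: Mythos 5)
Your proposal is correct and follows essentially the same route as the paper: both reduce $T(x)=y$ to the linear system $\operatorname{Tr}_{q}^{q^2}(\beta x)=\operatorname{Tr}_{q}^{q^2}((1+\alpha)y)$, $\operatorname{Tr}_{q}^{q^2}(\delta x)=\operatorname{Tr}_{q}^{q^2}(y)$ in the unknowns $x,x^q$ and solve it using the nonvanishing determinant $\beta\delta^q+\delta\beta^q$ supplied by Lemma \ref{lem1}. The only cosmetic differences are that you extract the two equations by applying the trace to $y=u+\alpha v$ rather than by expanding $y=\operatorname{Tr}_{q}^{q^2}((1+\alpha)y)+\alpha\operatorname{Tr}_{q}^{q^2}(y)$ and comparing coefficients in the basis $\{1,\alpha\}$, and that you include an explicit injectivity check which the paper leaves implicit in the unique solvability of the system.
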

	\begin{proof}
		We can easily see that $\operatorname{Tr}_{q}^{q^2}((1+\alpha)y)+\alpha\operatorname{Tr}_{q}^{q^2}(y)=y$. We consider the equation $T(x)=y$ or equivalently, we have 
		\begin{align}\label{eqn1}
			\operatorname{Tr}_{q}^{q^2}(\beta x)+\alpha\operatorname{Tr}_{q}^{q^2}(\delta x)=\operatorname{Tr}_{q}^{q^2}((1+\alpha)y)+\alpha\operatorname{Tr}_{q}^{q^2}(y).
		\end{align}
	Since $\{1, \alpha\}$ is a basis of $\mathbb{F}_{q^2}$ over $\mathbb{F}_{q}$, therefore from equation \ref{eqn1}, we get the following system of equations
	\begin{align*}
		\operatorname{Tr}_{q}^{q^2}(\beta x)=&\operatorname{Tr}_{q}^{q^2}((1+\alpha)y)\\
		\operatorname{Tr}_{q}^{q^2}(\delta x)=&\operatorname{Tr}_{q}^{q^2}(y).
	\end{align*}
The above system of equations can be written in matrix form as follows
	$$\begin{bmatrix}
		\beta&\beta^{q}\\
		\delta&\delta^{q}
	\end{bmatrix}
	\begin{bmatrix}
		x\\x^{q}
	\end{bmatrix}
	=\begin{bmatrix}
		\operatorname{Tr}_{q}^{q^2}((1+\alpha)y)\\\operatorname{Tr}_{q}^{q^2}(y)
	\end{bmatrix}.$$
	Applying elementary row operations, $R_1\rightarrow \delta^{q}R_1$ and $R_2\rightarrow \beta^{q}R_2$, we get\\
	\[\begin{bmatrix}
		\beta\delta^{q}&\delta^{q}\beta^{q}\\
		\delta\beta^{q}&\beta^{q}\delta^{q}
	\end{bmatrix}
	\begin{bmatrix}	
			x\\x^{q}
	\end{bmatrix}
	=\begin{bmatrix}
		\delta^{q}\operatorname{Tr}_{q}^{q^2}((1+\alpha)y)\\\beta^{q}\operatorname{Tr}_{q}^{q^2}(y)
	\end{bmatrix}.\]
	Applying elementary row operation, $R_1\rightarrow R_1+R_2$, we get\\
	\[\begin{bmatrix}
		\beta\delta^{q}+\delta\beta^{q}&0\\
		\delta\beta^{q}&\beta^{q}\delta^{q}
	\end{bmatrix}
	\begin{bmatrix}
		x\\x^{q}
	\end{bmatrix}
	=\begin{bmatrix}
		\delta^{q}\operatorname{Tr}_{q}^{q^2}((1+\alpha)y)+\beta^{q}\operatorname{Tr}_{q}^{q^2}(y)\\\beta^{q}\operatorname{Tr}_{q}^{q^2}(y)
	\end{bmatrix}.\]
	From the above system of equations, we get
	\[(\beta\delta^{q}+\delta\beta^{q})x=\delta^{q}\operatorname{Tr}_{q}^{q^2}((1+\alpha)y)+\beta^{q}\operatorname{Tr}_{q}^{q^2}(y).\]
	This implies\\
	$T^{-1}(x)=(\beta\delta^{q}+\delta\beta^{q})^{-1}\{\delta^{q}\operatorname{Tr}_{q}^{q^2}((1+\alpha)x)+\beta^{q}\operatorname{Tr}_{q}^{q^2}(x)\}$.
	\end{proof}
The following Lemma is used in the next Section.

	\begin{lem} \label{lem3}
		Let $q=2^m$, $\alpha\in\mathbb{F}_{q^2}\setminus\mathbb{F}_q$ be such that $\operatorname{Tr}_q^{q^2}(\alpha)=1$ . Then, we have 
		$\operatorname{Tr}_{q}^{q^2}(\beta T^{-1}(x))=\operatorname{Tr}_{q}^{q^2}((1+\alpha)x)$ and 
		$\operatorname{Tr}_{q}^{q^2}(\delta T^{-1}(x))=\operatorname{Tr}_{q}^{q^2}(x).$
	\end{lem}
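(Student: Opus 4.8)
The plan is to exploit the defining relation $T\bigl(T^{-1}(x)\bigr)=x$ together with the fact that $\{1,\alpha\}$ is an $\mathbb{F}_q$-basis of $\mathbb{F}_{q^2}$, so that the two trace quantities in the statement are forced by uniqueness of coordinates. First I would recall from the proof of Lemma~\ref{lem2} the identity $\operatorname{Tr}_q^{q^2}((1+\alpha)y)+\alpha\operatorname{Tr}_q^{q^2}(y)=y$, valid for every $y\in\mathbb{F}_{q^2}$ precisely because $\operatorname{Tr}_q^{q^2}(\alpha)=1$. Applying it with $y=x$ exhibits $x$ in the basis $\{1,\alpha\}$ with coordinates $\operatorname{Tr}_q^{q^2}((1+\alpha)x)$ and $\operatorname{Tr}_q^{q^2}(x)$, both lying in $\mathbb{F}_q$.

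Next, substituting $z=T^{-1}(x)$ into $T(z)=\operatorname{Tr}_q^{q^2}(\beta z)+\alpha\operatorname{Tr}_q^{q^2}(\delta z)$ gives
$$x=T\bigl(T^{-1}(x)\bigr)=\operatorname{Tr}_q^{q^2}\!\bigl(\beta T^{-1}(x)\bigr)+\alpha\,\operatorname{Tr}_q^{q^2}\!\bigl(\delta T^{-1}(x)\bigr),$$
which is a second representation of $x$ in the basis $\{1,\alpha\}$, since each trace lies in $\mathbb{F}_q$. Comparing the coefficients of $1$ and of $\alpha$ in the two representations and invoking uniqueness of coordinates yields the claimed identities $\operatorname{Tr}_q^{q^2}(\beta T^{-1}(x))=\operatorname{Tr}_q^{q^2}((1+\alpha)x)$ and $\operatorname{Tr}_q^{q^2}(\delta T^{-1}(x))=\operatorname{Tr}_q^{q^2}(x)$ at once.

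As an alternative, more computational route, I would instead substitute the explicit formula for $T^{-1}(x)$ from Lemma~\ref{lem2} directly. Writing $D=\beta\delta^{q}+\delta\beta^{q}$, one checks $D^{q}=D$, so $D\in\mathbb{F}_q$, and $D\neq 0$ by Lemma~\ref{lem1}. Using that $D^{-1}$, $\operatorname{Tr}_q^{q^2}((1+\alpha)x)$ and $\operatorname{Tr}_q^{q^2}(x)$ all lie in $\mathbb{F}_q$, the $\mathbb{F}_q$-linearity of $\operatorname{Tr}_q^{q^2}$ lets one pull these scalars out of the outer trace, reducing everything to the evaluations $\operatorname{Tr}_q^{q^2}(\beta\delta^{q})=\operatorname{Tr}_q^{q^2}(\delta\beta^{q})=D$ and $\operatorname{Tr}_q^{q^2}(\beta\beta^{q})=\operatorname{Tr}_q^{q^2}(\delta\delta^{q})=0$ (the last two vanish because $\beta\beta^{q},\delta\delta^{q}\in\mathbb{F}_q$ and the characteristic is $2$). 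The surviving terms then combine as $D^{-1}\cdot(\,\cdot\,)\cdot D$, giving the result.

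There is no substantial obstacle here; the only point requiring care is the bookkeeping that verifies the relevant quantities ($D$ and the two trace coordinates) genuinely lie in $\mathbb{F}_q$, which is what legitimizes both the uniqueness-of-coordinates argument and the $\mathbb{F}_q$-linearity manipulations.
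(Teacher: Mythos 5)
Your proposal is correct, and your primary argument takes a genuinely different (and cleaner) route than the paper. The paper proves this lemma by brute force: it substitutes the explicit formula for $T^{-1}(x)$ from Lemma~\ref{lem2} into $\operatorname{Tr}_{q}^{q^2}(\beta T^{-1}(x))$ and $\operatorname{Tr}_{q}^{q^2}(\delta T^{-1}(x))$, expands the outer trace term by term, and uses $(\beta\delta^{q})^{q}=\beta^{q}\delta$ together with the vanishing of $\operatorname{Tr}_{q}^{q^2}(\beta\beta^{q}c)$ and $\operatorname{Tr}_{q}^{q^2}(\delta\delta^{q}c)$ for $c\in\mathbb{F}_q$ to collapse everything --- exactly your ``alternative, more computational route,'' including the key evaluations $\operatorname{Tr}_{q}^{q^2}(\beta\delta^{q})=\beta\delta^{q}+\delta\beta^{q}$ and $\operatorname{Tr}_{q}^{q^2}(\beta\beta^{q})=0$. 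Your main argument instead compares the two expansions of $x$ in the basis $\{1,\alpha\}$: one from the identity $x=\operatorname{Tr}_{q}^{q^2}((1+\alpha)x)+\alpha\operatorname{Tr}_{q}^{q^2}(x)$ (already noted in the proof of Lemma~\ref{lem2}), the other from $x=T(T^{-1}(x))=\operatorname{Tr}_{q}^{q^2}(\beta T^{-1}(x))+\alpha\operatorname{Tr}_{q}^{q^2}(\delta T^{-1}(x))$; uniqueness of coordinates over $\mathbb{F}_q$ then gives both identities simultaneously with no computation. This buys brevity and makes transparent \emph{why} the lemma holds (the two claimed equalities are just the coordinates of $x$ read off in two ways), at the modest cost of relying on $T\circ T^{-1}=\mathrm{id}$, i.e.\ on the bijectivity of $T$ established in Lemma~\ref{lem2}, rather than only on the closed formula for $T^{-1}$. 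Both routes are sound; the only hypotheses you need to keep in force are those inherited from Lemma~\ref{lem2} ($\beta,\delta\in\mathbb{F}_{q^2}^{*}$ with $\beta\neq c\delta$ for all $c\in\mathbb{F}_q^{*}$, so that $\beta\delta^{q}+\delta\beta^{q}\neq 0$ by Lemma~\ref{lem1}), which you correctly flag.
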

\begin{proof}	
	We have 
		\begin{align*}
		\operatorname{Tr}_{q}^{q^2}(\beta T^{-1}(x))=&\operatorname{Tr}_{q}^{q^2}[\beta(\beta\delta^{q}+\delta\beta^{q})^{-1}\{\delta^{q}\operatorname{Tr}_{q}^{q^2}((1+\alpha)x)\\&+\beta^{q}\operatorname{Tr}_{q}^{q^2}(x)\}]\\
		=&\operatorname{Tr}_{q}^{q^2}[(\beta\delta^{q}+\delta\beta^{q})^{-1}\beta\delta^{q}\operatorname{Tr}_{q}^{q^2}((1+\alpha)x)\\&+(\beta\delta^{q}+\delta\beta^{q})^{-1}\beta\beta^{q}\operatorname{Tr}_{q}^{q^2}(x)]\\
		=&(\beta\delta^{q}+\delta\beta^{q})^{-1}\beta\delta^{q}\operatorname{Tr}_{q}^{q^2}((1+\alpha)x)\\&+(\beta\delta^{q}+\delta\beta^{q})^{-1}\beta^{q}\delta\operatorname{Tr}_{q}^{q^2}((1+\alpha)x)\\&+(\beta\delta^{q}+\delta\beta^{q})^{-1}\beta\beta^{q}\operatorname{Tr}_{q}^{q^2}(x)\\&+(\beta\delta^{q}+\delta\beta^{q})^{-1}\beta\beta^{q}\operatorname{Tr}_{q}^{q^2}(x)\\
		=&(\beta\delta^{q}+\delta\beta^{q})^{-1}\beta\delta^{q}\operatorname{Tr}_{q}^{q^2}((1+\alpha)x)\\&+(\beta\delta^{q}+\delta\beta^{q})^{-1}\beta^{q}\delta\operatorname{Tr}_{q}^{q^2}((1+\alpha)x)\\
		=&(\beta\delta^{q}+\delta\beta^{q})^{-1}(\beta\delta^{q}+\delta\beta^{q})\operatorname{Tr}_{q}^{q^2}((1+\alpha)x)\\
		=&\operatorname{Tr}_{q}^{q^2}((1+\alpha)x).
	\end{align*}
	\begin{align*}
		\operatorname{Tr}_{q}^{q^2}(\delta T^{-1}(x))=&\operatorname{Tr}_{q}^{q^2}[\delta(\beta\delta^{q}+\delta\beta^{q})^{-1}\{\delta^{q}\operatorname{Tr}_{q}^{q^2}((1+\alpha)x)\\&+\beta^{q}\operatorname{Tr}_{q}^{q^2}(x)\}]\\
		=&\operatorname{Tr}_{q}^{q^2}[(\beta\delta^{q}+\delta\beta^{q})^{-1}\delta\delta^{q}\operatorname{Tr}_{q}^{q^2}((1+\alpha)x)\\&+(\beta\delta^{q}+\delta\beta^{q})^{-1}\delta\beta^{q}\operatorname{Tr}_{q}^{q^2}(x)]\\
		=&(\beta\delta^{q}+\delta\beta^{q})^{-1}\delta\delta^{q}\operatorname{Tr}_{q}^{q^2}((1+\alpha)x)\\&+(\beta\delta^{q}+\delta\beta^{q})^{-1}\delta^{q}\delta\operatorname{Tr}_{q}^{q^2}((1+\alpha)x)\\&+(\beta\delta^{q}+\delta\beta^{q})^{-1}\delta\beta^{q}\operatorname{Tr}_{q}^{q^2}(x)\\&+(\beta\delta^{q}+\delta\beta^{q})^{-1}\beta\delta^{q}\operatorname{Tr}_{q}^{q^2}(x)\\
		=&\operatorname{Tr}_{q}^{q^2}(x).
	\end{align*}
\end{proof}
	\section{Compositional Inverses of Permutation Polynomials} 
	In this section we present the compositional inverses of six classes of permutation polynomials of the form $x+\gamma\operatorname{Tr}_q^{q^2}(h(x))$.
	\begin{thm} Let $q=2^m$, $\gamma\in \mathbb{F}_q$. Then the compositional inverse of permutation polynomial 
		$f_1(x)=x+\gamma\operatorname{Tr}_{q}^{q^2}(x^3+x^{q+2})$  over $\mathbb{F}_{q^2}$ is 
		$x+\gamma(\operatorname{Tr}_{q}^{q^2}(x))^3$.
	\end{thm}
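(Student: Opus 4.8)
The plan is to verify directly that $g(x) := x + \gamma(\operatorname{Tr}_q^{q^2}(x))^3$ satisfies $g(f_1(x)) = x$ for all $x \in \mathbb{F}_{q^2}$; since $f_1$ is already known to be a permutation polynomial, a one-sided inverse suffices. The key observation to set up is the behaviour of the trace on the two pieces of $f_1$. Write $T(x) = \operatorname{Tr}_q^{q^2}(x)$, so $T(x) \in \mathbb{F}_q$ and $T(x)^q = T(x)$. First I would compute $T(f_1(x))$: since $\gamma \in \mathbb{F}_q$ and $\operatorname{Tr}_q^{q^2}(x^3 + x^{q+2}) \in \mathbb{F}_q$, applying $\operatorname{Tr}_q^{q^2}$ to $f_1(x) = x + \gamma\operatorname{Tr}_q^{q^2}(x^3+x^{q+2})$ gives
\[
T(f_1(x)) = T(x) + 2\gamma\operatorname{Tr}_q^{q^2}(x^3+x^{q+2}) = T(x),
\]
because the characteristic is $2$ (here I use that $\operatorname{Tr}_q^{q^2}(c) = 2c = 0$ for $c \in \mathbb{F}_q$). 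So the inner trace is unchanged by $f_1$.

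Next I would substitute into $g$:
\[
g(f_1(x)) = f_1(x) + \gamma\big(T(f_1(x))\big)^3 = x + \gamma\operatorname{Tr}_q^{q^2}(x^3+x^{q+2}) + \gamma T(x)^3.
\]
Hence it remains to prove the identity
\[
\operatorname{Tr}_q^{q^2}(x^3 + x^{q+2}) = \big(\operatorname{Tr}_q^{q^2}(x)\big)^3
\]
for all $x \in \mathbb{F}_{q^2}$, after which the two correction terms cancel (characteristic $2$) and $g(f_1(x)) = x$. To establish this identity I would expand $T(x)^3 = (x + x^q)^3$. Using $(x+x^q)^2 = x^2 + x^{2q}$ and then multiplying by $(x + x^q)$, one gets
\[
(x+x^q)^3 = x^3 + x^{2}x^{q} + x^{2q}x + x^{3q} = (x^3 + x^{3q}) + (x^{q+2} + x^{2q+1}).
\]
Now I recognize each parenthesized group as a trace: $x^3 + x^{3q} = \operatorname{Tr}_q^{q^2}(x^3)$, and $x^{q+2} + x^{2q+1} = x^{q+2} + (x^{q+2})^q = \operatorname{Tr}_q^{q^2}(x^{q+2})$, where I use $(x^{q+2})^q = x^{q^2+2q} = x^{1+2q}$ since $x^{q^2} = x$. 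Therefore $(x+x^q)^3 = \operatorname{Tr}_q^{q^2}(x^3) + \operatorname{Tr}_q^{q^2}(x^{q+2}) = \operatorname{Tr}_q^{q^2}(x^3 + x^{q+2})$ by additivity, which is exactly the required identity.

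The only mild subtlety — and the step I would be most careful about — is the exponent bookkeeping in showing $x^{q+2} + x^{2q+1}$ equals $\operatorname{Tr}_q^{q^2}(x^{q+2})$, i.e. checking that raising $x^{q+2}$ to the $q$-th power and reducing via $x^{q^2} = x$ lands on $x^{2q+1}$; everything else is a routine characteristic-$2$ expansion. Once the displayed identity is in hand, combining it with $T(f_1(x)) = T(x)$ yields $g(f_1(x)) = x$, and since $f_1$ permutes $\mathbb{F}_{q^2}$, $g = f_1^{-1}$. $\qquad\blacksquare$
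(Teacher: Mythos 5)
Your proposal is correct, but it takes a genuinely different route from the paper. You verify directly that $g(x)=x+\gamma(\operatorname{Tr}_q^{q^2}(x))^3$ is a left inverse of $f_1$, which suffices since $f_1$ is a bijection of a finite set. Both of your ingredients check out: the invariance $\operatorname{Tr}_q^{q^2}(f_1(x))=\operatorname{Tr}_q^{q^2}(x)$ holds because $\gamma\operatorname{Tr}_q^{q^2}(x^3+x^{q+2})\in\mathbb{F}_q$ (here the hypothesis $\gamma\in\mathbb{F}_q$ is used) and every element of $\mathbb{F}_q$ has zero trace to $\mathbb{F}_q$ in characteristic $2$; and the identity $\operatorname{Tr}_q^{q^2}(x^3+x^{q+2})=(x+x^q)^3$ is correct, your exponent bookkeeping $(x^{q+2})^q=x^{2q+q^2}=x^{2q+1}$ being exactly right. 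The paper instead \emph{derives} the inverse rather than verifying a candidate: it conjugates $f_1$ by the linearized permutation $T(x)=\operatorname{Tr}_q^{q^2}(\beta x)+\alpha\operatorname{Tr}_q^{q^2}(\delta x)$, reduces $f_1(T(x))=T(y)$ to a $2\times2$ linear system in $x,x^q$ by comparing coordinates in the basis $\{1,\alpha\}$, solves it by row reduction, and unwinds using Lemmas \ref{lem2} and \ref{lem3}. The same cubing identity is the engine of the paper's computation of $f_1\circ T$, so the core algebra is shared; your argument buys brevity and transparency at the price of having to know the answer in advance, whereas the decomposition method is constructive and is reused essentially verbatim for the remaining five families, including those whose inverses involve the exponent $t$ with $3t\equiv 1\pmod{2^m-1}$ and would be harder to guess.
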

	\begin{proof} We take $T(x)= \operatorname{Tr}_q^{q^2}(\beta x)+\alpha\operatorname{Tr}_q^{q^2}(\delta x)$ as defined in Lemma \ref{lem2} and find $f_1\circ T(x)=f_1(T(x))$.
		We have,
		\begin{align}
		f_1\circ T(x)=&\operatorname{Tr}_{q}^{q^2}(\beta x)+\alpha\operatorname{Tr}_{q}^{q^2}(\delta x)+\gamma\operatorname{Tr}_{q}^{q^2}[\{\operatorname{Tr}_{q}^{q^2}(\beta x)+\alpha\operatorname{Tr}_{q}^{q^2}(\delta x)\}^3\notag\\&+\{\operatorname{Tr}_{q}^{q^2}(\beta x)+\alpha\operatorname{Tr}_{q}^{q^2}(\delta x)\}^{q+2}]\notag\\
		=&\operatorname{Tr}_{q}^{q^2}(\beta x)+\alpha\operatorname{Tr}_{q}^{q^2}(\delta x)+\gamma\operatorname{Tr}_{q}^{q^2}[\{\operatorname{Tr}_{q}^{q^2}(\beta x)+\alpha\operatorname{Tr}_{q}^{q^2}(\delta x)\}^2\{\operatorname{Tr}_{q}^{q^2}(\beta x)\notag\\&+\alpha\operatorname{Tr}_{q}^{q^2}(\delta x)+(\operatorname{Tr}_{q}^{q^2}(\beta x)+\alpha\operatorname{Tr}_{q}^{q^2}(\delta x))^q\}]\notag\\
		=&\operatorname{Tr}_{q}^{q^2}(\beta x)+\alpha\operatorname{Tr}_{q}^{q^2}(\delta x)+\gamma\operatorname{Tr}_{q}^{q^2}[\{\operatorname{Tr}_{q}^{q^2}(\beta x)+\alpha\operatorname{Tr}_{q}^{q^2}(\delta x)\}^2\{\operatorname{Tr}_{q}^{q^2}(\beta x)\notag\\&+\alpha\operatorname{Tr}_{q}^{q^2}(\delta x)+\operatorname{Tr}_{q}^{q^2}(\beta x)+\alpha\operatorname{Tr}_{q}^{q^2}(\delta x)+\operatorname{Tr}_{q}^{q^2}(\delta x)\}]\notag\\
		=&\operatorname{Tr}_{q}^{q^2}(\beta x)+\alpha\operatorname{Tr}_{q}^{q^2}(\delta x)+\gamma[\operatorname{Tr}_{q}^{q^2}(\delta x)]^3\label{a}.
	\end{align}
For any arbitrary element $y\in \mathbb{F}_{q^2}$, $T(y)=\operatorname{Tr}_{q}^{q^2}(\beta y)+\alpha\operatorname{Tr}_{q}^{q^2}(\delta y)$ is an arbitrary element of $\mathbb{F}_{q^{2}}$.
	We consider the equation $f_1\circ T(x)=T(y)=\operatorname{Tr}_{q}^{q^2}(\beta y)+\alpha\operatorname{Tr}_{q}^{q^2}(\delta y)$. Since $\{1, \alpha\}$ is basis of $\mathbb{F}_{q^2}$ over $\mathbb{F}_{q}$,
	 comparing each side, we get the following system of equations
	\begin{align}
		\operatorname{Tr}_{q}^{q^2}(\delta x)=&\operatorname{Tr}_{q}^{q^2}(\delta y)\\
		\operatorname{Tr}_{q}^{q^2}(\beta x)=&\operatorname{Tr}_{q}^{q^2}(\beta y)+\gamma(\operatorname{Tr}_{q}^{q^2}(\delta y))^3.
	\end{align}
	We write the above equations in matrix form as follows\\
	$$\begin{bmatrix}
		\beta&\beta^{q}\\
		\delta&\delta^{q}
	\end{bmatrix}
	\begin{bmatrix}
		x\\x^{q}
	\end{bmatrix}=
	\begin{bmatrix}
		\operatorname{Tr}_{q}^{q^2}(\beta y)+\gamma(\operatorname{Tr}_{q}^{q^2}(\delta y))^3\\
		\operatorname{Tr}_{q}^{q^2}(\delta y)
	\end{bmatrix}$$.
	Applying elementary row operations $R_1\rightarrow\delta^{q}R_1$ and $R_2\rightarrow\beta^{q}R_2$, we get
	
	$$\begin{bmatrix}
		\beta\delta^{q}&\beta^{q}\delta^{q}\\
		\beta^{q}\delta&\beta^{q}\delta^{q}
	\end{bmatrix}
	\begin{bmatrix}
		x\\x^{q}
	\end{bmatrix}=
	\begin{bmatrix}
		\delta^{q}\operatorname{Tr}_{q}^{q^2}(\beta y)+\delta^{q}\gamma(\operatorname{Tr}_{q}^{q^2}(\delta y))^3\\
		\beta^{q}\operatorname{Tr}_{q}^{q^2}(\delta y)
	\end{bmatrix}$$.
	Again applying elementary row operation $R_1\rightarrow R_1+R_2$, we get
	\begin{align}
	\begin{bmatrix}
		\beta\delta^{q}+\beta^{q}\delta&0\\
		\beta^{q}\delta&\beta^{q}\delta^{q}
	\end{bmatrix}&
	\begin{bmatrix}
		x\\x^{q}
	\end{bmatrix}=A\label{b}\end{align}
where,\begin{align*} A=
	\begin{bmatrix}
		\delta^{q}\operatorname{Tr}_{q}^{q^2}(\beta y)+\delta^{q}\gamma(\operatorname{Tr}_{q}^{q^2}(\delta y))^3+\beta^{q}\operatorname{Tr}_{q}^{q^2}(\delta y)\\
		\beta^{q}\operatorname{Tr}_{q}^{q^2}(\delta y)
	\end{bmatrix}.
\end{align*}
	From the equation \ref{b}, we get\\
	$x=(\beta\delta^{q}+\beta^{q}\delta)^{-1}\{\delta^{q}\operatorname{Tr}_{q}^{q^2}(\beta y)+\delta^{q}\gamma(\operatorname{Tr}_{q}^{q^2}(\delta y))^3+\beta^{q}\operatorname{Tr}_{q}^{q^2}(\delta y)\}$\\
	or equivalently, we have
	\begin{align*}
	T^{-1}\circ f_1^{-1}\circ T(y)=&(\beta\delta^{q}+\beta^{q}\delta)^{-1}\{\delta^{q}\operatorname{Tr}_{q}^{q^2}(\beta y)+\delta^{q}\gamma(\operatorname{Tr}_{q}^{q^2}(\delta y))^3\\&+\beta^{q}\operatorname{Tr}_{q}^{q^2}(\delta y)\}.
\end{align*}
	Substituting $y=T^{-1}(x)$ in above equation, we get
	\begin{align*}
	T^{-1}\circ f_1^{-1}(x)=&(\beta\delta^{q}+\beta^{q}\delta)^{-1}\{\delta^{q}\operatorname{Tr}_{q}^{q^2}(\beta T^{-1}(x))+\delta^{q}\gamma(\operatorname{Tr}_{q}^{q^2}(\delta T^{-1}(x)))^3\\&+\beta^{q}\operatorname{Tr}_{q}^{q^2}(\delta T^{-1}(x))\}.
   \end{align*}
   Now using lemma \ref{lem3}, we get
   \begin{align}
   	T^{-1}\circ f_1^{-1}(x)=&(\beta\delta^{q}+\beta^{q}\delta)^{-1}\{\delta^{q}\operatorname{Tr}_{q}^{q^2}((1+\alpha)x)+\delta^{q}\gamma(\operatorname{Tr}_{q}^{q^2}(x))^3
   	\notag\\&+\beta^{q}\operatorname{Tr}_{q}^{q^2}(x)\}.
   \end{align}
   Since $T(x)= \operatorname{Tr}_q^{q^2}(\beta x)+\alpha\operatorname{Tr}_q^{q^2}(\delta x)$, we see that
   \begin{align}
   	f_1^{-1}(x)=&\operatorname{Tr}_{q}^{q^2}\{\beta (T^{-1}\circ f_1^{-1}(x))\}+\alpha\operatorname{Tr}_{q}^{q^2}\{\delta (T^{-1}\circ f_1^{-1}(x))\}.\label{c}
   \end{align}
   Next, we have
   \begin{align}
   	\operatorname{Tr}_{q}^{q^2}\{\beta (T^{-1}\circ f_1^{-1}(x))\}=&\operatorname{Tr}_{q}^{q^2}[\beta (\beta\delta^{q}+\beta^{q}\delta)^{-1}\{\delta^{q}\operatorname{Tr}_{q}^{q^2}((1+\alpha)x)\notag\\&+\delta^{q}\gamma(\operatorname{Tr}_{q}^{q^2}(x))^3+\beta^{q}\operatorname{Tr}_{q}^{q^2}(x)\}]\notag\\
   	=&\operatorname{Tr}_{q}^{q^2}[\beta\delta^{q}(\beta\delta^{q}+\beta^{q}\delta)^{-1}\operatorname{Tr}_{q}^{q^2}((1+\alpha)x)\notag\\&+\beta\delta^{q}(\beta\delta^{q}+\beta^{q}\delta)^{-1}\gamma(\operatorname{Tr}_{q}^{q^2}(x))^3\notag\\&+\beta\beta^{q}(\beta\delta^{q}+\beta^{q}\delta)^{-1}\operatorname{Tr}_{q}^{q^2}(x)]\notag\\
   	=&\operatorname{Tr}_{q}^{q^2}((1+\alpha)x)+\gamma(\operatorname{Tr}_{q}^{q^2}(x))^3\label{d}.
   \end{align}
   and
   \begin{align}
   	\operatorname{Tr}_{q}^{q^2}(\delta (T^{-1}\circ f_1^{-1}(x)))=&\operatorname{Tr}_{q}^{q^2}[\delta (\beta\delta^{q}+\beta^{q}\delta)^{-1}\{\delta^{q}\operatorname{Tr}_{q}^{q^2}((1+\alpha)x)\notag\\&+\delta^{q}\gamma(\operatorname{Tr}_{q}^{q^2}(x))^3+\beta^{q}\operatorname{Tr}_{q}^{q^2}(x)\}]\notag\\
   	=&\operatorname{Tr}_{q}^{q^2}[\delta\delta^{q}(\beta\delta^{q}+\beta^{q}\delta)^{-1}\operatorname{Tr}_{q}^{q^2}((1+\alpha)x)\notag\\&+\delta\delta^{q}(\beta\delta^{q}+\beta^{q}\delta)^{-1}\gamma(\operatorname{Tr}_{q}^{q^2}(x))^3\notag\\&+\delta\beta^{q}(\beta\delta^{q}+\beta^{q}\delta)^{-1}\operatorname{Tr}_{q}^{q^2}(x)]\notag\\
   	=&\operatorname{Tr}_{q}^{q^2}(x).\label{e}
   \end{align}
  Now, using equations \ref{d} and \ref{e} in equation \ref{c}, we get
   \begin{align*}
   f_1^{-1}(x)=&\operatorname{Tr}_{q}^{q^2}((1+\alpha)x)+\gamma(\operatorname{Tr}_{q}^{q^2}(x))^3+\alpha\operatorname{Tr}_{q}^{q^2}(x)\\
   =&x+\gamma(\operatorname{Tr}_{q}^{q^2}(x))^3.
\end{align*}
	\end{proof}
	\begin{thm}Let $q=2^m$, $\gamma\in \mathbb{F}_q$. Then the compositional inverse of permutation polynomial 
			$f_2(x)=x+\gamma\operatorname{Tr}_{q}^{q^2}(x+x^2+x^3+x^{q+2})$ over $\mathbb{F}_{q^2}$ is
		$f_2^{-1}(x)=x+\gamma\operatorname{Tr}_{q}^{q^2}(x)+\gamma(\operatorname{Tr}_{q}^{q^2}(x))^2+\gamma(\operatorname{Tr}_{q}^{q^2}(x))^3$.
	\end{thm}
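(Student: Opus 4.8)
The plan is to follow the same conjugation-by-$T$ strategy used for $f_1$, preceded by a simplification of the trace term. The key preliminary observation is that $x^{q+2}=x^2\cdot x^q$, so $x^3+x^{q+2}=x^2(x+x^q)=x^2\operatorname{Tr}_q^{q^2}(x)$; since $\operatorname{Tr}_q^{q^2}(x)\in\mathbb{F}_q$ is fixed by the $q$-th power map, $\operatorname{Tr}_q^{q^2}(x^3+x^{q+2})=\operatorname{Tr}_q^{q^2}(x)\,\operatorname{Tr}_q^{q^2}(x^2)=(\operatorname{Tr}_q^{q^2}(x))^3$, while $\operatorname{Tr}_q^{q^2}(x+x^2)=\operatorname{Tr}_q^{q^2}(x)+(\operatorname{Tr}_q^{q^2}(x))^2$. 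Hence
\[
f_2(x)=x+\gamma\operatorname{Tr}_q^{q^2}(x)+\gamma(\operatorname{Tr}_q^{q^2}(x))^2+\gamma(\operatorname{Tr}_q^{q^2}(x))^3 .
\]

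With this form in hand I would mimic the proof of the previous theorem. Fix $\alpha,\beta,\delta$ as in Lemma \ref{lem2} and set $T(x)=\operatorname{Tr}_q^{q^2}(\beta x)+\alpha\operatorname{Tr}_q^{q^2}(\delta x)$. Since $\operatorname{Tr}_q^{q^2}(\beta x)\in\mathbb{F}_q$ has zero $q^2/q$-trace and $\operatorname{Tr}_q^{q^2}(\alpha\operatorname{Tr}_q^{q^2}(\delta x))=\operatorname{Tr}_q^{q^2}(\alpha)\operatorname{Tr}_q^{q^2}(\delta x)=\operatorname{Tr}_q^{q^2}(\delta x)$, we have $\operatorname{Tr}_q^{q^2}(T(x))=\operatorname{Tr}_q^{q^2}(\delta x)$, and therefore
\[
f_2(T(x))=\operatorname{Tr}_q^{q^2}(\beta x)+\gamma\operatorname{Tr}_q^{q^2}(\delta x)+\gamma(\operatorname{Tr}_q^{q^2}(\delta x))^2+\gamma(\operatorname{Tr}_q^{q^2}(\delta x))^3+\alpha\operatorname{Tr}_q^{q^2}(\delta x).
\]
Equating $f_2(T(x))=T(y)$ and comparing the $\{1,\alpha\}$-components gives $\operatorname{Tr}_q^{q^2}(\delta x)=\operatorname{Tr}_q^{q^2}(\delta y)$ together with $\operatorname{Tr}_q^{q^2}(\beta x)=\operatorname{Tr}_q^{q^2}(\beta y)+\gamma\operatorname{Tr}_q^{q^2}(\delta y)+\gamma(\operatorname{Tr}_q^{q^2}(\delta y))^2+\gamma(\operatorname{Tr}_q^{q^2}(\delta y))^3$. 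Solving this $2\times2$ system exactly as before (the coefficient matrix is unchanged) yields $T^{-1}\circ f_2^{-1}\circ T(y)$; substituting $y=T^{-1}(x)$ and invoking Lemma \ref{lem3} (so that $\operatorname{Tr}_q^{q^2}(\beta T^{-1}(x))=\operatorname{Tr}_q^{q^2}((1+\alpha)x)$ and $\operatorname{Tr}_q^{q^2}(\delta T^{-1}(x))=\operatorname{Tr}_q^{q^2}(x)$) produces a closed form for $T^{-1}\circ f_2^{-1}(x)$. Finally, recovering $f_2^{-1}(x)=\operatorname{Tr}_q^{q^2}\!\big(\beta\,(T^{-1}\circ f_2^{-1}(x))\big)+\alpha\operatorname{Tr}_q^{q^2}\!\big(\delta\,(T^{-1}\circ f_2^{-1}(x))\big)$ and using the identity $\operatorname{Tr}_q^{q^2}((1+\alpha)x)+\alpha\operatorname{Tr}_q^{q^2}(x)=x$ collapses everything to $x+\gamma\operatorname{Tr}_q^{q^2}(x)+\gamma(\operatorname{Tr}_q^{q^2}(x))^2+\gamma(\operatorname{Tr}_q^{q^2}(x))^3$.

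The only real work is the bookkeeping in the last step, where one repeatedly uses that elements of $\mathbb{F}_q$ have zero $\operatorname{Tr}_q^{q^2}$-trace (characteristic $2$) and that $\beta\delta^q(\beta\delta^q+\beta^q\delta)^{-1}$ and $\delta\beta^q(\beta\delta^q+\beta^q\delta)^{-1}$ both have $\operatorname{Tr}_q^{q^2}$-trace $1$; there is no conceptual obstacle beyond matching the format of the preceding theorem. In fact the reduction in the first paragraph already makes the statement transparent: the correction term $\gamma\operatorname{Tr}_q^{q^2}(x)+\gamma(\operatorname{Tr}_q^{q^2}(x))^2+\gamma(\operatorname{Tr}_q^{q^2}(x))^3$ lies in $\mathbb{F}_q$, so applying $\operatorname{Tr}_q^{q^2}$ to $f_2(x)$ returns $\operatorname{Tr}_q^{q^2}(x)$, whence $f_2(f_2(x))=f_2(x)+\gamma\operatorname{Tr}_q^{q^2}(x)+\gamma(\operatorname{Tr}_q^{q^2}(x))^2+\gamma(\operatorname{Tr}_q^{q^2}(x))^3=x$, the same correction being added twice in characteristic $2$. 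Thus $f_2$ is an involution equal to its own compositional inverse, and the $T$-conjugation argument is merely the ``commutative diagram'' certification of this fact. I expect writing that conjugation out cleanly, rather than any genuine difficulty, to be the most time-consuming part.
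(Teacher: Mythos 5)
Your proposal is correct, and its main body (conjugating by $T$, comparing $\{1,\alpha\}$-components, solving the $2\times 2$ linear system, then pulling back through Lemma \ref{lem3}) is exactly the paper's proof, including the key simplification $f_2(T(x))=\operatorname{Tr}_q^{q^2}(\beta x)+\alpha\operatorname{Tr}_q^{q^2}(\delta x)+\gamma\{\operatorname{Tr}_q^{q^2}(\delta x)+(\operatorname{Tr}_q^{q^2}(\delta x))^2+(\operatorname{Tr}_q^{q^2}(\delta x))^3\}$, which is the paper's equation \eqref{2a}. However, your closing observation is a genuinely different and far shorter proof that the paper does not make: writing $f_2(x)=x+g(\operatorname{Tr}_q^{q^2}(x))$ with $g(u)=\gamma(u+u^2+u^3)$, the correction $g(\operatorname{Tr}_q^{q^2}(x))$ lies in $\mathbb{F}_q$ (here $\gamma\in\mathbb{F}_q$ is essential), so it is killed by $\operatorname{Tr}_q^{q^2}$ in characteristic $2$; hence $\operatorname{Tr}_q^{q^2}(f_2(x))=\operatorname{Tr}_q^{q^2}(x)$ and $f_2(f_2(x))=x+2g(\operatorname{Tr}_q^{q^2}(x))=x$, i.e. $f_2$ is an involution and the stated inverse is just $f_2$ itself rewritten. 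That two-line argument simultaneously proves that $f_2$ permutes $\mathbb{F}_{q^2}$ and identifies its inverse, needs none of the $\alpha,\beta,\delta$ machinery, and applies verbatim to the paper's Theorems 3.1, 3.3 and 3.4 in the case $\gamma\in\mathbb{F}_q$ (all of those inverses are likewise the original maps). The conjugation method only earns its keep in the cases $\operatorname{Tr}_q^{q^2}(\gamma)=1$, where the correction term is no longer in $\mathbb{F}_q$ and the map is not an involution; for the present theorem your direct argument is strictly preferable.
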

	\begin{proof} We find $f_2\circ T(x)=f_2(T(x))$, where $T(x)$ is same as in the previous theorem. We have,
	\begin{align}
		f_2\circ T(x)=&\operatorname{Tr}_{q}^{q^2}(\beta x)+\alpha\operatorname{Tr}_{q}^{q^2}(\delta x)+\gamma\operatorname{Tr}_{q}^{q^2}[\operatorname{Tr}_{q}^{q^2}(\beta x)+\alpha\operatorname{Tr}_{q}^{q^2}(\delta x)+\{\operatorname{Tr}_{q}^{q^2}(\beta x)\notag\\&+\alpha\operatorname{Tr}_{q}^{q^2}(\delta x)\}^2+\{\operatorname{Tr}_{q}^{q^2}(\beta x)+\alpha\operatorname{Tr}_{q}^{q^2}(\delta x)\}^3+\{\operatorname{Tr}_{q}^{q^2}(\beta x)\notag\\&+\alpha\operatorname{Tr}_{q}^{q^2}(\delta x)\}^{q+2}]\notag\\
		=&\operatorname{Tr}_{q}^{q^2}(\beta x)+\alpha\operatorname{Tr}_{q}^{q^2}(\delta x)+\gamma\operatorname{Tr}_{q}^{q^2}[(\operatorname{Tr}_{q}^{q^2}(\beta x)+\alpha\operatorname{Tr}_{q}^{q^2}(\delta x)+\{\operatorname{Tr}_{q}^{q^2}(\beta x)\notag\\&+\alpha\operatorname{Tr}_{q}^{q^2}(\delta x)\}^2\{1+\operatorname{Tr}_{q}^{q^2}(\beta x)+\alpha\operatorname{Tr}_{q}^{q^2}(\delta x)+\operatorname{Tr}_{q}^{q^2}(\beta x)\notag\\&+\alpha^{q}\operatorname{Tr}_{q}^{q^2}(\delta x)\}]\notag\\
		=&\operatorname{Tr}_{q}^{q^2}(\beta x)+\alpha\operatorname{Tr}_{q}^{q^2}(\delta x)+\gamma\operatorname{Tr}_{q}^{q^2}[\operatorname{Tr}_{q}^{q^2}(\beta x)+\alpha\operatorname{Tr}_{q}^{q^2}(\delta x)+\{\operatorname{Tr}_{q}^{q^2}(\beta x)\notag\\&+\alpha\operatorname{Tr}_{q}^{q^2}(\delta x)\}^2(1+\operatorname{Tr}_{q}^{q^2}(\beta x)+\alpha\operatorname{Tr}_{q}^{q^2}(\delta x)+\operatorname{Tr}_{q}^{q^2}(\beta x)+\alpha\operatorname{Tr}_{q}^{q^2}(\delta x)\notag\\&+\operatorname{Tr}_{q}^{q^2}(\delta x))]\notag\\
		=&\operatorname{Tr}_{q}^{q^2}(\beta x)+\alpha\operatorname{Tr}_{q}^{q^2}(\delta x)+\gamma\{\operatorname{Tr}_{q}^{q^2}(\delta x)+(\operatorname{Tr}_{q}^{q^2}(\delta x))^2+(\operatorname{Tr}_{q}^{q^2}(\delta x))^3\}.\label{2a}
		\end{align}
	
	Now, we consider the equation $f_2\circ T(x)=T(y)=\operatorname{Tr}_{q}^{q^2}(\beta y)+\alpha\operatorname{Tr}_{q}^{q^2}(\delta y)$.
	On comparing the coefficient of $\alpha$ on each sides, we get
	\begin{align*}
		\operatorname{Tr}_{q}^{q^2}(\delta x)=&\operatorname{Tr}_{q}^{q^2}(\delta y)\\
		\operatorname{Tr}_{q}^{q^2}(\beta x)=&\operatorname{Tr}_{q}^{q^2}(\beta y)+\gamma\{\operatorname{Tr}_{q}^{q^2}(\delta y)+(\operatorname{Tr}_{q}^{q^2}(\delta y))^2+(\operatorname{Tr}_{q}^{q^2}(\delta y))^3\}.
	\end{align*}
	Equivalently, we have
	$$\begin{bmatrix}
		\beta&\beta^{q}\\
		\delta&\delta^{q}
	\end{bmatrix}
	\begin{bmatrix}
		x\\x^{q}
	\end{bmatrix}=
	\begin{bmatrix}
		\operatorname{Tr}_{q}^{q^2}(\beta y)+\gamma\{\operatorname{Tr}_{q}^{q^2}(\delta y)+(\operatorname{Tr}_{q}^{q^2}(\delta y))^2+(\operatorname{Tr}_{q}^{q^2}(\delta y))^3\}\\
		\operatorname{Tr}_{q}^{q^2}(\delta y)
	\end{bmatrix}.$$
	Applying elementary row operations $R_1\rightarrow\delta^{q}R_1$ and $R_2\rightarrow\beta^{q}R_2$, we get
	
	$$\begin{bmatrix}
		\beta\delta^{q}&\beta^{q}\delta^{q}\\
		\beta^{q}\delta&\beta^{q}\delta^{q}
	\end{bmatrix}
	\begin{bmatrix}
		x\\x^{q}
	\end{bmatrix}=B$$
	\text{where},$$ B'=
	\begin{bmatrix}
		\delta^{q}\operatorname{Tr}_{q}^{q^2}(\beta y)+\delta^{q}\gamma\{\operatorname{Tr}_{q}^{q^2}(\delta y)+(\operatorname{Tr}_{q}^{q^2}(\delta y))^2+(\operatorname{Tr}_{q}^{q^2}(\delta y))^3\}\\
		\beta^{q}\operatorname{Tr}_{q}^{q^2}(\delta y)
	\end{bmatrix}.$$
	Again applying elementary row operation $R_1\rightarrow R_1+R_2$, we get
	\begin{align}\begin{bmatrix}
		\beta\delta^{q}+\beta^{q}\delta&0\\
		\beta^{q}\delta&\beta^{q}\delta^{q}
	\end{bmatrix}
	\begin{bmatrix}
		x\\x^{q}
	\end{bmatrix}=B\label{2b}\end{align}
	where,$$B=\begin{bmatrix}
		\delta^{q}\operatorname{Tr}_{q}^{q^2}(\beta y)+\delta^{q}\gamma\{\operatorname{Tr}_{q}^{q^2}(\delta y)+(\operatorname{Tr}_{q}^{q^2}(\delta y))^2+(\operatorname{Tr}_{q}^{q^2}(\delta y))^3\}+\beta^{q}\operatorname{Tr}_{q}^{q^2}(\delta y)\\
		\beta^{q}\operatorname{Tr}_{q}^{q^2}(\delta y)
	\end{bmatrix}.$$
		From the equation \ref{2b}, we get
		\begin{align*}
	x=&(\beta\delta^{q}+\beta^{q}\delta)^{-1}\{\delta^{q}\operatorname{Tr}_{q}^{q^2}(\beta y)+\delta^{q}\gamma\operatorname{Tr}_{q}^{q^2}(\delta y)+\delta^{q}\gamma(\operatorname{Tr}_{q}^{q^2}(\delta y))^2\\&+\delta^{q}\gamma(\operatorname{Tr}_{q}^{q^2}(\delta y))^3+\beta^{q}\operatorname{Tr}_{q}^{q^2}(\delta y)\}
	\end{align*}
	or
	\begin{align*}
	T^{-1}\circ f_2^{-1}\circ T(y)=&(\beta\delta^{q}+\beta^{q}\delta)^{-1}\{\delta^{q}\operatorname{Tr}_{q}^{q^2}(\beta y)+\delta^{q}\gamma\operatorname{Tr}_{q}^{q^2}(\delta y)+\delta^{q}\gamma\\&(\operatorname{Tr}_{q}^{q^2}(\delta y))^2+\delta^{q}\gamma(\operatorname{Tr}_{q}^{q^2}(\delta y))^3+\beta^{q}\operatorname{Tr}_{q}^{q^2}(\delta y)\}.
\end{align*}
	Now substituting $y=T^{-1}(x)$ in above equation, we get
	\begin{align*}
		T^{-1}\circ f_2^{-1}(x)=&(\beta\delta^{q}+\beta^{q}\delta)^{-1}\{\delta^{q}\operatorname{Tr}_{q}^{q^2}(\beta T^{-1}(x))+\delta^{q}\gamma\operatorname{Tr}_{q}^{q^2}(\delta T^{-1}(x))\\&+\delta^{q}\gamma(\operatorname{Tr}_{q}^{q^2}(\delta T^{-1}(x)))^2+\delta^{q}\gamma(\operatorname{Tr}_{q}^{q^2}(\delta T^{-1}(x)))^3\\&+\beta^{q}\operatorname{Tr}_{q}^{q^2}(\delta T^{-1}(x))\}.
	\end{align*}
	Now using lemma \ref{lem3}, we get
	\begin{align*}
		T^{-1}\circ f_2^{-1}(x)=&(\beta\delta^{q}+\beta^{q}\delta)^{-1}\{\delta^{q}\operatorname{Tr}_{q}^{q^2}((1+\alpha)x)+\delta^{q}\gamma\operatorname{Tr}_{q}^{q^2}(x)\\&+\delta^{q}\gamma(\operatorname{Tr}_{q}^{q^2}(x))^2+\delta^{q}\gamma(\operatorname{Tr}_{q}^{q^2}(x))^3+\beta^{q}\operatorname{Tr}_{q}^{q^2}(x)\}.
	\end{align*}
	 We note that
	\begin{align}
		f_2^{-1}(x)=&\operatorname{Tr}_{q}^{q^2}[\beta (T^{-1}\circ f_2^{-1}(x))]+\alpha\operatorname{Tr}_{q}^{q^2}[\delta (T^{-1}\circ f_2^{-1}(x))].\label{2c}
	\end{align}
	Now,
	\begin{align}
		\operatorname{Tr}_{q}^{q^2}[\beta (T^{-1}\circ f_2^{-1}(x))]=&\operatorname{Tr}_{q}^{q^2}[\beta (\beta\delta^{q}+\beta^{q}\delta)^{-1}\{(\delta^{q}\operatorname{Tr}_{q}^{q^2}((1+\alpha)x)\notag\\&+\delta^{q}\gamma\operatorname{Tr}_{q}^{q^2}(x)+\delta^{q}\gamma(\operatorname{Tr}_{q}^{q^2}(x))^2+\delta^{q}\gamma(\operatorname{Tr}_{q}^{q^2}(x))^3\notag\\&+\beta^{q}\operatorname{Tr}_{q}^{q^2}(x))\}]\notag\\
		=&\operatorname{Tr}_{q}^{q^2}[\beta\delta^{q}(\beta\delta^{q}+\beta^{q}\delta)^{-1}\operatorname{Tr}_{q}^{q^2}((1+\alpha)x)\notag\\&+\beta\delta^{q}\gamma(\beta\delta^{q}+\beta^{q}\delta)^{-1}\operatorname{Tr}_{q}^{q^2}(x)\notag\\&+\beta\delta^{q}\gamma(\beta\delta^{q}+\beta^{q}\delta)^{-1}(\operatorname{Tr}_{q}^{q^2}(x))^2\notag\\&+\beta\delta^{q}\gamma(\beta\delta^{q}+\beta^{q}\delta)^{-1}(\operatorname{Tr}_{q}^{q^2}(x))^3\notag\\&+\beta\beta^{q}(\beta\delta^{q}+\beta^{q}\delta)^{-1}\operatorname{Tr}_{q}^{q^2}(x)]\notag\\
		=&\operatorname{Tr}_{q}^{q^2}((1+\alpha)x)+\gamma\operatorname{Tr}_{q}^{q^2}(x)+\gamma(\operatorname{Tr}_{q}^{q^2}(x))^2\notag\\&+\gamma(\operatorname{Tr}_{q}^{q^2}(x))^3.\label{2d}
	\end{align}\\
	and
	\begin{align}
		\operatorname{Tr}_{q}^{q^2}[\delta (T^{-1}\circ f_2^{-1}(x))]=&\operatorname{Tr}_{q}^{q^2}[\delta (\beta\delta^{q}+\beta^{q}\delta)^{-1}\{\delta^{q}\operatorname{Tr}_{q}^{q^2}((1+\alpha)x)\notag\\&+\delta^{q}\gamma\operatorname{Tr}_{q}^{q^2}(x)+\delta^{q}\gamma(\operatorname{Tr}_{q}^{q^2}(x))^2+\delta^{q}\gamma(\operatorname{Tr}_{q}^{q^2}(x))^3\notag\\&+\beta^{q}\operatorname{Tr}_{q}^{q^2}(x)\}]\notag\\
		=&\operatorname{Tr}_{q}^{q^2}[\delta\delta^{q}(\beta\delta^{q}+\beta^{q}\delta)^{-1}\operatorname{Tr}_{q}^{q^2}((1+\alpha)x)\notag\\&+\delta\delta^{q}\gamma(\beta\delta^{q}+\beta^{q}\delta)^{-1}\operatorname{Tr}_{q}^{q^2}(x)\notag\\&+\delta\delta^{q}\gamma(\beta\delta^{q}+\beta^{q}\delta)^{-1}(\operatorname{Tr}_{q}^{q^2}(x))^2\notag\\&+\delta\delta^{q}\gamma(\beta\delta^{q}+\beta^{q}\delta)^{-1}(\operatorname{Tr}_{q}^{q^2}(x))^3\notag\\&+\delta\beta^{q}(\beta\delta^{q}+\beta^{q}\delta)^{-1}\operatorname{Tr}_{q}^{q^2}(x)]\notag\\
		=&\operatorname{Tr}_{q}^{q^2}(x).\label{2e}
	\end{align}
	 Now using equations \ref{2d} and \ref{2e} in equation \ref{2c}, we get
	\begin{align*}
		f_2^{-1}(x)=&\operatorname{Tr}_{q}^{q^2}((1+\alpha)x)+\gamma\operatorname{Tr}_{q}^{q^2}(x)+\gamma(\operatorname{Tr}_{q}^{q^2}(x))^2+\gamma(\operatorname{Tr}_{q}^{q^2}(x))^3\\&+\alpha\operatorname{Tr}_{q}^{q^2}(x)\\
		=&x+\gamma\operatorname{Tr}_{q}^{q^2}(x)+\gamma(\operatorname{Tr}_{q}^{q^2}(x))^2+\gamma(\operatorname{Tr}_{q}^{q^2}(x))^3.
	\end{align*}
\end{proof}

\begin{thm} The compositional inverse of permutation polynomial 
		$f_3(x)=x+\gamma\operatorname{Tr}_{q}^{q^2}(x+x^3+x^{q+2})$ over $\mathbb{F}_{q^2}$ is \\
	$f_3^{-1}(x)=\begin{cases}x+\gamma\operatorname{Tr}_{q}^{q^2}(x)+\gamma(\operatorname{Tr}_{q}^{q^2}(x))^3,& \text{if}~\gamma\in\mathbb{F}_q\\
	\operatorname{Tr}_{q}^{q^2}(1+\gamma)x+\gamma(\operatorname{Tr}_{q}^{q^2}(x))^t,& \text{if}~\operatorname{Tr}_{q}^{q^2}(\gamma)=1\text{, } \text{and}~ m \text{ is odd}
	\end{cases}$\\
	where $3t\equiv 1\mod{(2^m-1)}$.
\end{thm}
\begin{proof} We can easily see that $\operatorname{Tr}_{q}^{q^2}(\gamma+\gamma^2)=0$ means either $\operatorname{Tr}_{q}^{q^2}(\gamma)=0$ or $\operatorname{Tr}_{q}^{q^2}(\gamma)=1$. First, we assume that $\operatorname{Tr}_{q}^{q^2}(\gamma)=0$ or equivalently $\gamma \in \mathbb{F}_{q}$. We have,
	\begin{align}
		f_3\circ T(x)=&\operatorname{Tr}_{q}^{q^2}(\beta x)+\alpha\operatorname{Tr}_{q}^{q^2}(\delta x)+\gamma\operatorname{Tr}_{q}^{q^2}[\operatorname{Tr}_{q}^{q^2}(\beta x)+\alpha\operatorname{Tr}_{q}^{q^2}(\delta x)+\{\operatorname{Tr}_{q}^{q^2}(\beta x)\notag\\&+\alpha\operatorname{Tr}_{q}^{q^2}(\delta x)\}^3+\{\operatorname{Tr}_{q}^{q^2}(\beta x)+\alpha\operatorname{Tr}_{q}^{q^2}(\delta x)\}^{q+2}]\notag\\
		=&\operatorname{Tr}_{q}^{q^2}(\beta x)+\alpha\operatorname{Tr}_{q}^{q^2}(\delta x)+\gamma\operatorname{Tr}_{q}^{q^2}[\operatorname{Tr}_{q}^{q^2}(\beta x)+\alpha\operatorname{Tr}_{q}^{q^2}(\delta x)+\{\operatorname{Tr}_{q}^{q^2}(\beta x)\notag\\&+\alpha\operatorname{Tr}_{q}^{q^2}(\delta x)\}^2\{\operatorname{Tr}_{q}^{q^2}(\beta x)+\alpha\operatorname{Tr}_{q}^{q^2}(\delta x)+\operatorname{Tr}_{q}^{q^2}(\beta x)+\alpha\operatorname{Tr}_{q}^{q^2}(\delta x)\notag\\&+\operatorname{Tr}_{q}^{q^2}(\delta x)\}]\notag\\
		=&\operatorname{Tr}_{q}^{q^2}(\beta x)+\alpha\operatorname{Tr}_{q}^{q^2}(\delta x)+\gamma\{\operatorname{Tr}_{q}^{q^2}(\delta x)+(\operatorname{Tr}_{q}^{q^2}(\delta x))^3\}.\label{3a}
	\end{align}
	On considering the equation $f_3 \circ T(x)=T(y)=\operatorname{Tr}_{q}^{q^2}(\beta y)+\alpha\operatorname{Tr}_{q}^{q^2}(\delta y)$, we get the following system of equations
	\begin{align*}
		\operatorname{Tr}_{q}^{q^2}(\delta x)=&\operatorname{Tr}_{q}^{q^2}(\delta y)\\
		\operatorname{Tr}_{q}^{q^2}(\beta x)=&\operatorname{Tr}_{q}^{q^2}(\beta y)+\gamma\{\operatorname{Tr}_{q}^{q^2}(\delta y)+(\operatorname{Tr}_{q}^{q^2}(\delta y))^3\}.
	\end{align*}
	Equivalently in matrix form we get,
	$$\begin{bmatrix}
		\beta&\beta^{q}\\
		\delta&\delta^{q}
	\end{bmatrix}
	\begin{bmatrix}
		x\\x^{q}
	\end{bmatrix}=
	\begin{bmatrix}
		\operatorname{Tr}_{q}^{q^2}(\beta y)+\gamma\{\operatorname{Tr}_{q}^{q^2}(\delta y)+(\operatorname{Tr}_{q}^{q^2}(\delta y))^3\}\\
		\operatorname{Tr}_{q}^{q^2}(\delta y)
	\end{bmatrix}.$$
	Applying elementary row operations $R_1\rightarrow\delta^{q}R_1$ and $R_2\rightarrow\beta^{q}R_2$, we get
	$$\begin{bmatrix}
		\beta\delta^{q}&\beta^{q}\delta^{q}\\
		\beta^{q}\delta&\beta^{q}\delta^{q}
	\end{bmatrix}
	\begin{bmatrix}
		x\\x^{q}
	\end{bmatrix}=
	\begin{bmatrix}
		\delta^{q}\operatorname{Tr}_{q}^{q^2}(\beta y)+\delta^{q}\gamma\{\operatorname{Tr}_{q}^{q^2}(\delta y)+(\operatorname{Tr}_{q}^{q^2}(\delta y))^3\}\\
		\beta^{q}\operatorname{Tr}_{q}^{q^2}(\delta y)
	\end{bmatrix}.$$
	Again applying elementary row operations $R_1\rightarrow R_1+R_2$, we get
	\begin{align}\begin{bmatrix}
		\beta\delta^{q}+\beta^{q}\delta&0\\
		\beta^{q}\delta&\beta^{q}\delta^{q}
	\end{bmatrix}
	\begin{bmatrix}
		x\\x^{q}
	\end{bmatrix}=C\label{3b}\end{align}
	\text{where}, $C=
	\begin{bmatrix}
		\delta^{q}\operatorname{Tr}_{q}^{q^2}(\beta y)+\delta^{q}\gamma\{\operatorname{Tr}_{q}^{q^2}(\delta y)+(\operatorname{Tr}_{q}^{q^2}(\delta y))^3\}+\beta^{q}\operatorname{Tr}_{q}^{q^2}(\delta y)\\
		\beta^{q}\operatorname{Tr}_{q}^{q^2}(\delta y)
	\end{bmatrix}.$\\
		From the equation \ref{3b}, we get
		\begin{align*}
	x=&(\beta\delta^{q}+\beta^{q}\delta)^{-1}\{\delta^{q}\operatorname{Tr}_{q}^{q^2}(\beta y)+\delta^{q}\gamma\operatorname{Tr}_{q}^{q^2}(\delta y)+\delta^{q}\gamma(\operatorname{Tr}_{q}^{q^2}(\delta y))^3\\&+\beta^{q}\operatorname{Tr}_{q}^{q^2}(\delta y)\}
\end{align*}
	or
	\begin{align*}
	T^{-1}\circ f_3^{-1}\circ T(y)=&(\beta\delta^{q}+\beta^{q}\delta)^{-1}\{\delta^{q}\operatorname{Tr}_{q}^{q^2}(\beta y)+\delta^{q}\gamma\operatorname{Tr}_{q}^{q^2}(\delta y)+\delta^{q}\gamma\\&(\operatorname{Tr}_{q}^{q^2}(\delta y))^3+\beta^{q}\operatorname{Tr}_{q}^{q^2}(\delta y)\}.
\end{align*}
	We replace $y$ with $T^{-1}(x)$ in above equation
	\begin{align*}
		T^{-1}\circ f_3^{-1}(x)=&(\beta\delta^{q}+\beta^{q}\delta)^{-1}\{\delta^{q}\operatorname{Tr}_{q}^{q^2}(\beta T^{-1}(x))+\delta^{q}\gamma\operatorname{Tr}_{q}^{q^2}(\delta T^{-1}(x))\\&+\delta^{q}\gamma(\operatorname{Tr}_{q}^{q^2}(\delta T^{-1}(x)))^3+\beta^{q}\operatorname{Tr}_{q}^{q^2}(\delta T^{-1}(x))\}.
	\end{align*}
	Now using lemma \ref{lem3} we get
	\begin{align*}
		T^{-1}\circ f_3^{-1}(x)=&(\beta\delta^{q}+\beta^{q}\delta)^{-1}\{\delta^{q}\operatorname{Tr}_{q}^{q^2}((1+\alpha)x)+\delta^{q}\gamma\operatorname{Tr}_{q}^{q^2}(x)\\&+\delta^{q}\gamma(\operatorname{Tr}_{q}^{q^2}(x))^3+\beta^{q}\operatorname{Tr}_{q}^{q^2}(x)\}.
	\end{align*}
	 We know that
	\begin{align}
		f_3^{-1}(x)=&\operatorname{Tr}_{q}^{q^2}[\beta (T^{-1}\circ f_3^{-1}(x))]+\alpha\operatorname{Tr}_{q}^{q^2}[\delta (T^{-1}\circ f_3^{-1}(x))].
		\label{3c}
	\end{align}
		Now we have
	\begin{align}
		\operatorname{Tr}_{q}^{q^2}[\beta (T^{-1}\circ f_3^{-1}(x))]=&\operatorname{Tr}_{q}^{q^2}[\beta (\beta\delta^{q}+\beta^{q}\delta)^{-1}\{\delta^{q}\operatorname{Tr}_{q}^{q^2}((1+\alpha)x)\notag\\&+\delta^{q}\gamma\operatorname{Tr}_{q}^{q^2}(x)+\delta^{q}\gamma(\operatorname{Tr}_{q}^{q^2}(x))^3+\beta^{q}\operatorname{Tr}_{q}^{q^2}(x)\}]\notag\\
		=&\operatorname{Tr}_{q}^{q^2}[\beta\delta^{q}(\beta\delta^{q}+\beta^{q}\delta)^{-1}\operatorname{Tr}_{q}^{q^2}((1+\alpha)x)\notag\\&+\beta\delta^{q}\gamma(\beta\delta^{q}+\beta^{q}\delta)^{-1}\operatorname{Tr}_{q}^{q^2}(x)\notag\\&+\beta\delta^{q}\gamma(\beta\delta^{q}+\beta^{q}\delta)^{-1}(\operatorname{Tr}_{q}^{q^2}(x))^3\notag\\&+\beta\beta^{q}(\beta\delta^{q}+\beta^{q}\delta)^{-1}\operatorname{Tr}_{q}^{q^2}(x)]\notag\\
		=&\operatorname{Tr}_{q}^{q^2}((1+\alpha)x)+\gamma\operatorname{Tr}_{q}^{q^2}(x)+\gamma(\operatorname{Tr}_{q}^{q^2}(x))^3.\label{3d}
	\end{align}
	and
	\begin{align}
		\operatorname{Tr}_{q}^{q^2}[\delta (T^{-1}\circ f_3^{-1}(x))]=&\operatorname{Tr}_{q}^{q^2}[\delta (\beta\delta^{q}+\beta^{q}\delta)^{-1}\{\delta^{q}\operatorname{Tr}_{q}^{q^2}((1+\alpha)x)\notag\\&+\delta^{q}\gamma\operatorname{Tr}_{q}^{q^2}(x)+\delta^{q}\gamma(\operatorname{Tr}_{q}^{q^2}(x))^3+\beta^{q}\operatorname{Tr}_{q}^{q^2}(x)\}]\notag\\
		=&\operatorname{Tr}_{q}^{q^2}[\delta\delta^{q}(\beta\delta^{q}+\beta^{q}\delta)^{-1}\operatorname{Tr}_{q}^{q^2}((1+\alpha)x)\notag\\&+\delta\delta^{q}\gamma(\beta\delta^{q}+\beta^{q}\delta)^{-1}\operatorname{Tr}_{q}^{q^2}(x)\notag\\&+\delta\delta^{q}\gamma(\beta\delta^{q}+\beta^{q}\delta)^{-1}(\operatorname{Tr}_{q}^{q^2}(x))^3\notag\\&+\delta\beta^{q}(\beta\delta^{q}+\beta^{q}\delta)^{-1}\operatorname{Tr}_{q}^{q^2}(x)]\notag\\
		=&\operatorname{Tr}_{q}^{q^2}(x).\label{3e}
	\end{align}
	Now using equations \ref{3d} and \ref{3e} in equation \ref{3c}, we get
	\begin{align*}
		f_3^{-1}(x)=&\operatorname{Tr}_{q}^{q^2}((1+\alpha)x)+\gamma\operatorname{Tr}_{q}^{q^2}(x)+\gamma(\operatorname{Tr}_{q}^{q^2}(x))^3+\alpha\operatorname{Tr}_{q}^{q^2}(x)\\
		=&x+\gamma\operatorname{Tr}_{q}^{q^2}(x)+\gamma(\operatorname{Tr}_{q}^{q^2}(x))^3.
	\end{align*}
	
	Next we assume $\operatorname{Tr}(\gamma)=1$. Clearly, $\gamma\in \mathbb{F}_{q^2}\setminus\mathbb{F}_q$ and 
	$\{1,\gamma\}$ is a basis of $\mathbb{F}_{q^2}$ over $\mathbb{F}_{q}$.
	Let $U(x)=\operatorname{Tr}_q^{q^2}{(\beta x)}+\gamma\operatorname{Tr}_q^{q^2}(\delta x)$. By lemma \ref{lem2}, we have
	$U^{-1}(x)=(\beta\delta^{q}+\delta\beta^{q})^{-1}\{\delta^{q}\operatorname{Tr}_{q}^{q^2}((1+\gamma)x)+\beta^{q}\operatorname{Tr}_{q}^{q^2}(x)\}.$
	Using equation \ref{3a}, we can easily see that
  \begin{align}
  	f_3(U(x))
  	=&\operatorname{Tr}_q^{q^2}(\beta x)+\gamma(\operatorname{Tr}_q^{q^2}(\delta x))^3.\label{3f}
  \end{align}
	Next we consider the equation $f_3\circ U(x)=U(y)=\operatorname{Tr}_{q}^{q^2}(\beta y)+\gamma\operatorname{Tr}_{q}^{q^2}(\delta y)$. This gives the following system of equations	
	\begin{align*}
		\operatorname{Tr}_q^{q^2}(\beta x)=\operatorname{Tr}_q^{q^2}(\beta y)\\
		\operatorname{Tr}_q^{q^2}(\delta x)^3=\operatorname{Tr}_q^{q^2}(\delta y).
	\end{align*}
	Let $t$ be a positive integer satisfying $3t\equiv 1\mod{(2^m-1)}$.
	Then we have $\operatorname{Tr}_q^{q^2}{\delta x}=(\operatorname{Tr}_q^{q^2}(\delta y))^t$. Equivalently, we have the following system of equation
	$$\begin{bmatrix}
		\beta&\beta^{q}\\
		\delta&\delta^{q}
	\end{bmatrix}
	\begin{bmatrix}
		x\\x^{q}
	\end{bmatrix}=
	\begin{bmatrix}
		\operatorname{Tr}_{q}^{q^2}(\beta y)\\
		(\operatorname{Tr}_{q}^{q^2}(\delta y))^t
	\end{bmatrix}.$$
	Applying elementary row operation $R_1\rightarrow\delta^{q}R_1$ and $R_2\rightarrow\beta^{q}R_2$, we get
	$$\begin{bmatrix}
		\beta\delta^{q}&\beta^{q}\delta^{q}\\
		\beta^{q}\delta&\beta^{q}\delta^{q}
	\end{bmatrix}
	\begin{bmatrix}
		x\\x^{q}
	\end{bmatrix}=
	\begin{bmatrix}
		\delta^{q}\operatorname{Tr}_{q}^{q^2}(\beta y)\\
		\beta^{q}(\operatorname{Tr}_{q}^{q^2}(\delta y))^t
	\end{bmatrix}.$$
	Again applying elementary row operation $R_1\rightarrow R_1+R_2$, we get
	\begin{align}\begin{bmatrix}
			\beta\delta^{q}+\beta^{q}\delta&0\\
			\beta^{q}\delta&\beta^{q}\delta^{q}
		\end{bmatrix}
		\begin{bmatrix}
			x\\x^{q}
		\end{bmatrix}=
	\begin{bmatrix}
		\delta^{q}\operatorname{Tr}_{q}^{q^2}(\beta y)+	\beta^{q}(\operatorname{Tr}_{q}^{q^2}(\delta y))^t\\
		\beta^{q}(\operatorname{Tr}_{q}^{q^2}(\delta y))^t
	\end{bmatrix}.\label{3g}\end{align}
	From the equation \ref{3g}, we get
	\begin{align*}
		x=&(\beta\delta^{q}+\beta^{q}\delta)^{-1}\{\delta^{q}\operatorname{Tr}_{q}^{q^2}(\beta y)+\beta^{q}(\operatorname{Tr}_{q}^{q^2}(\delta y))^t\}
	\end{align*}
	or\begin{align*}
		U^{-1}\circ f_3^{-1}\circ U(y)=&(\beta\delta^{q}+\beta^{q}\delta)^{-1}\{\delta^{q}\operatorname{Tr}_{q}^{q^2}(\beta y)+\beta^{q}(\operatorname{Tr}_{q}^{q^2}(\delta y))^t\}.
	\end{align*}
	For $y=U^{-1}(x)$, we get
	\begin{align*}
		U^{-1}\circ f_3^{-1}(x)=&(\beta\delta^{q}+\beta^{q}\delta)^{-1}\{\delta^{q}\operatorname{Tr}_{q}^{q^2}(\beta U^{-1}(x))+\beta^{q}(\operatorname{Tr}_{q}^{q^2}(\delta U^{-1}(x)))^t\}.
	\end{align*}
	Now using Lemma \ref{lem3}, we get
	\begin{align*}
		U^{-1}\circ f_3^{-1}(x)=&(\beta\delta^{q}+\beta^{q}\delta)^{-1}\{\delta^{q}\operatorname{Tr}_{q}^{q^2}((1+\gamma)x)+\beta^{q}(\operatorname{Tr}_{q}^{q^2}(x))^t\}.
	\end{align*}
	Taking each side composition with $U$, we get
	\begin{align*}
		f_3^{-1}(x)=&\operatorname{Tr}_{q}^{q^2}[\beta (\beta\delta^{q}+\beta^{q}\delta)^{-1}\{\delta^{q}\operatorname{Tr}_{q}^{q^2}((1+\gamma)x)+\beta^{q}(\operatorname{Tr}_{q}^{q^2}(x))^t\}]\\&+\gamma\operatorname{Tr}_{q}^{q^2}[\delta (\beta\delta^{q}+\beta^{q}\delta)^{-1}\{\delta^{q}\operatorname{Tr}_{q}^{q^2}((1+\gamma)x)+\beta^{q}(\operatorname{Tr}_{q}^{q^2}(x))^t\}]\\
		=&\operatorname{Tr}_{q}^{q^2}((1+\gamma)x)+\gamma(\operatorname{Tr}_{q}^{q^2}(x))^t.
	\end{align*}
\end{proof}

\begin{thm} The compositional inverse of permutation polynomial 
	$f_4(x)=x+\gamma\operatorname{Tr}_{q}^{q^2}(x^2+x^3+x^{q+2})$ over $\mathbb{F}_{q^2}$ is \\
	$f_4^{-1}(x)=\begin{cases}x+\gamma(\operatorname{Tr}_{q}^{q^2}(x))^2+\gamma(\operatorname{Tr}_{q}^{q^2}(x))^3&,\text{if } \gamma\in\mathbb{F}_q \\
	\operatorname{Tr}_{q}^{q^2}((1+\gamma)x)+\gamma[(\operatorname{Tr}_{q}^{q^2}(x)+1)^t+1]&, \text{if}~\operatorname{Tr}_{q}^{q^2}(\gamma)=1\text{, } \text{and}~ m \text{ is odd}
	\end{cases}$
	where $3t\equiv 1\mod{(2^m-1)}$.
\end{thm}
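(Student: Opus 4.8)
The plan is to reproduce, essentially verbatim, the conjugation argument used for $f_1$, $f_2$ and $f_3$. Recall that $f_4$ permutes $\mathbb{F}_{q^2}$ precisely when $\gamma\in\mathbb{F}_q$ (automatic for $m$ even) or when $\operatorname{Tr}_q^{q^2}(\gamma)=1$ with $m$ odd; these two cases are exhaustive since $\operatorname{Tr}_q^{q^2}(\gamma+\gamma^2)=0$ forces $\operatorname{Tr}_q^{q^2}(\gamma)\in\{0,1\}$ and $\operatorname{Tr}_q^{q^2}(\gamma)=0$ is equivalent to $\gamma\in\mathbb{F}_q$. In each case I would conjugate $f_4$ by an $\mathbb{F}_q$-linearized permutation of the form $\operatorname{Tr}_q^{q^2}(\beta\,\cdot)+(\text{scalar})\operatorname{Tr}_q^{q^2}(\delta\,\cdot)$, chosen so that the conjugated map is triangular in the two $\mathbb{F}_q$-coordinates of its argument, invert the resulting $2\times2$ system over $\mathbb{F}_{q^2}$, substitute back using Lemmas \ref{lem2} and \ref{lem3}, and recombine coordinates.

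In the case $\gamma\in\mathbb{F}_q$, I would fix $\alpha\in\mathbb{F}_{q^2}\setminus\mathbb{F}_q$ with $\operatorname{Tr}_q^{q^2}(\alpha)=1$, take $\beta,\delta$ as in Lemma \ref{lem2}, set $T(x)=\operatorname{Tr}_q^{q^2}(\beta x)+\alpha\operatorname{Tr}_q^{q^2}(\delta x)$, and compute $f_4\circ T(x)$. Writing $b=\operatorname{Tr}_q^{q^2}(\delta x)\in\mathbb{F}_q$, the key reduction is $T(x)^2+T(x)^3+T(x)^{q+2}=T(x)^2(1+T(x)+T(x)^q)=T(x)^2(1+b)$, because $T(x)+T(x)^q=\operatorname{Tr}_q^{q^2}(\alpha)\,b=b$; applying $\operatorname{Tr}_q^{q^2}$, using $\operatorname{Tr}_q^{q^2}(\alpha^2)=(\operatorname{Tr}_q^{q^2}(\alpha))^2=1$ and characteristic $2$, collapses this to $f_4\circ T(x)=\operatorname{Tr}_q^{q^2}(\beta x)+\alpha\operatorname{Tr}_q^{q^2}(\delta x)+\gamma(\operatorname{Tr}_q^{q^2}(\delta x))^2+\gamma(\operatorname{Tr}_q^{q^2}(\delta x))^3$. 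Setting $f_4\circ T(x)=T(y)$, comparing coordinates in $\{1,\alpha\}$, triangularizing the linear system in $x,x^q$ by the same row operations as before, solving for $x$, substituting $y=T^{-1}(x)$ (Lemma \ref{lem3}), and finally recovering $f_4^{-1}$ from its $\{1,\alpha\}$-coordinates via $\operatorname{Tr}_q^{q^2}((1+\alpha)x)+\alpha\operatorname{Tr}_q^{q^2}(x)=x$ yields $f_4^{-1}(x)=x+\gamma(\operatorname{Tr}_q^{q^2}(x))^2+\gamma(\operatorname{Tr}_q^{q^2}(x))^3$. Throughout one uses $\operatorname{Tr}_q^{q^2}(\beta\delta^q(\beta\delta^q+\beta^q\delta)^{-1})=1$ and $\operatorname{Tr}_q^{q^2}(\beta\beta^q(\beta\delta^q+\beta^q\delta)^{-1})=0$ (since $\beta\beta^q\in\mathbb{F}_q$ and $\operatorname{Tr}_q^{q^2}$ kills $\mathbb{F}_q$ in characteristic $2$), and the analogous identities with $\delta$.

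In the case $\operatorname{Tr}_q^{q^2}(\gamma)=1$, $m$ odd, the set $\{1,\gamma\}$ is an $\mathbb{F}_q$-basis, so I would use $U(x)=\operatorname{Tr}_q^{q^2}(\beta x)+\gamma\operatorname{Tr}_q^{q^2}(\delta x)$, whose inverse is given by Lemma \ref{lem2} with $\alpha$ replaced by $\gamma$. The same reduction (now with $\operatorname{Tr}_q^{q^2}(\gamma)=\operatorname{Tr}_q^{q^2}(\gamma^2)=1$) gives $f_4\circ U(x)=\operatorname{Tr}_q^{q^2}(\beta x)+\gamma[\operatorname{Tr}_q^{q^2}(\delta x)+(\operatorname{Tr}_q^{q^2}(\delta x))^2+(\operatorname{Tr}_q^{q^2}(\delta x))^3]$. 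The one genuinely new ingredient — and the sole place where oddness of $m$ intervenes — is the characteristic-$2$ identity $u+u^2+u^3=(1+u)^3+1$: equating $f_4\circ U(x)=U(y)$ and comparing the $\gamma$-coordinates gives $(1+\operatorname{Tr}_q^{q^2}(\delta x))^3=1+\operatorname{Tr}_q^{q^2}(\delta y)$, so by the Section 2 lemma that $x\mapsto x^3$ permutes $\mathbb{F}_{2^m}$ for odd $m$ with inverse $x\mapsto x^t$ ($3t\equiv1\bmod(2^m-1)$) one gets $\operatorname{Tr}_q^{q^2}(\delta x)=(1+\operatorname{Tr}_q^{q^2}(\delta y))^t+1$; together with $\operatorname{Tr}_q^{q^2}(\beta x)=\operatorname{Tr}_q^{q^2}(\beta y)$ this feeds into the same triangularization, and after substituting $y=U^{-1}(x)$ (Lemma \ref{lem3}) and recombining the $\{1,\gamma\}$-coordinates with the trace identities above, one obtains $f_4^{-1}(x)=\operatorname{Tr}_q^{q^2}((1+\gamma)x)+\gamma[(\operatorname{Tr}_q^{q^2}(x)+1)^t+1]$.

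I expect the only real obstacle to be spotting and exploiting the factorization $u+u^2+u^3=(1+u)^3+1$, which converts the cubic relation in the $\gamma$-coordinate into a pure cube and thereby makes the cube-permutation lemma applicable; everything else is bookkeeping structurally identical to the $f_1$ and $f_3$ computations. A minor additional point to verify is that the two displayed formulas for $f_4^{-1}$ are consistent wherever both cases could a priori apply (they do not overlap, since $\gamma\in\mathbb{F}_q$ and $\operatorname{Tr}_q^{q^2}(\gamma)=1$ are mutually exclusive), so no compatibility check is actually needed.
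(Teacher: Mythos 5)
Your proposal is correct and follows essentially the same route as the paper: conjugation by $T(x)=\operatorname{Tr}_q^{q^2}(\beta x)+\alpha\operatorname{Tr}_q^{q^2}(\delta x)$ when $\gamma\in\mathbb{F}_q$ and by $U(x)=\operatorname{Tr}_q^{q^2}(\beta x)+\gamma\operatorname{Tr}_q^{q^2}(\delta x)$ when $\operatorname{Tr}_q^{q^2}(\gamma)=1$, the same triangularized $2\times 2$ system, and the same key identity $u+u^2+u^3=(1+u)^3+1$ combined with the cube-permutation lemma for odd $m$. The only difference is cosmetic: you make explicit the trace identities $\operatorname{Tr}_q^{q^2}(\beta\delta^q(\beta\delta^q+\beta^q\delta)^{-1})=1$ and $\operatorname{Tr}_q^{q^2}(\beta\beta^q(\beta\delta^q+\beta^q\delta)^{-1})=0$ that the paper uses implicitly in its final recombination step.
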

\begin{proof}
First we assume that $\gamma \in \mathbb{F}_{q}$.	
	We have
\begin{align}
	f_4\circ T(x)=&\operatorname{Tr}_{q}^{q^2}(\beta x)+\alpha\operatorname{Tr}_{q}^{q^2}(\delta x)+\gamma\operatorname{Tr}_{q}^{q^2}[\{\operatorname{Tr}_{q}^{q^2}(\beta x)+\alpha\operatorname{Tr}_{q}^{q^2}(\delta x)\}^2\notag\\&+\{\operatorname{Tr}_{q}^{q^2}(\beta x)+\alpha\operatorname{Tr}_{q}^{q^2}(\delta x)\}^3+\{\operatorname{Tr}_{q}^{q^2}(\beta x)+\alpha\operatorname{Tr}_{q}^{q^2}(\delta x)\}^{q+2}]\notag\\
	=&\operatorname{Tr}_{q}^{q^2}(\beta x)+\alpha\operatorname{Tr}_{q}^{q^2}(\delta x)+\gamma\operatorname{Tr}_{q}^{q^2}[\{\operatorname{Tr}_{q}^{q^2}(\beta x)+\alpha\operatorname{Tr}_{q}^{q^2}(\delta x)\}^2\notag\\&\{1+\operatorname{Tr}_{q}^{q^2}(\beta x)+\alpha\operatorname{Tr}_{q}^{q^2}(\delta x)+(\operatorname{Tr}_{q}^{q^2}(\beta x)+\alpha\operatorname{Tr}_{q}^{q^2}(\delta x))^q\}]\notag\\
	=&\operatorname{Tr}_{q}^{q^2}(\beta x)+\alpha\operatorname{Tr}_{q}^{q^2}(\delta x)+\gamma\operatorname{Tr}_{q}^{q^2}[\{\operatorname{Tr}_{q}^{q^2}(\beta x)+\alpha\operatorname{Tr}_{q}^{q^2}(\delta x)\}^2\notag\\&\{1+\operatorname{Tr}_{q}^{q^2}(\beta x)+\alpha\operatorname{Tr}_{q}^{q^2}(\delta x)+\operatorname{Tr}_{q}^{q^2}(\beta x)+\alpha\operatorname{Tr}_{q}^{q^2}(\delta x)+\operatorname{Tr}_{q}^{q^2}(\delta x)\}]\notag\\
	=&\operatorname{Tr}_{q}^{q^2}(\beta x)+\alpha\operatorname{Tr}_{q}^{q^2}(\delta x)+\gamma\{(\operatorname{Tr}_{q}^{q^2}(\delta x))^2+(\operatorname{Tr}_{q}^{q^2}(\delta x))^3\}.\label{4a}
\end{align}

Again we consider the equation $f_4\circ T(x)=T(y)=\operatorname{Tr}_{q}^{q^2}(\beta y)+\alpha\operatorname{Tr}_{q}^{q^2}(\delta y)$, we get the following system of equations
\begin{align*}
	\operatorname{Tr}_{q}^{q^2}(\delta x)=&\operatorname{Tr}_{q}^{q^2}(\delta y)\\
	\operatorname{Tr}_{q}^{q^2}(\beta x)=&\operatorname{Tr}_{q}^{q^2}(\beta y)+\gamma\{(\operatorname{Tr}_{q}^{q^2}(\delta y))^2+(\operatorname{Tr}_{q}^{q^2}(\delta y))^3\}.
\end{align*}
In matrix form, we have
$$\begin{bmatrix}
	\beta&\beta^{q}\\
	\delta&\delta^{q}
\end{bmatrix}
\begin{bmatrix}
	x\\x^{q}
\end{bmatrix}=
\begin{bmatrix}
	\operatorname{Tr}_{q}^{q^2}(\beta y)+\gamma\{(\operatorname{Tr}_{q}^{q^2}(\delta y))^2+(\operatorname{Tr}_{q}^{q^2}(\delta y))^3\}\\
	\operatorname{Tr}_{q}^{q^2}(\delta y)
\end{bmatrix}.$$
Applying elementary row operation $R_1\rightarrow\delta^{q}R_1$ and $R_2\rightarrow\beta^{q}R_2$, we get
$$\begin{bmatrix}
	\beta\delta^{q}&\beta^{q}\delta^{q}\\
	\beta^{q}\delta&\beta^{q}\delta^{q}
\end{bmatrix}
\begin{bmatrix}
	x\\x^{q}
\end{bmatrix}=
\begin{bmatrix}
	\delta^{q}\operatorname{Tr}_{q}^{q^2}(\beta y)+\delta^{q}\gamma\{(\operatorname{Tr}_{q}^{q^2}(\delta y))^2+(\operatorname{Tr}_{q}^{q^2}(\delta y))^3\}\\
	\beta^{q}\operatorname{Tr}_{q}^{q^2}(\delta y)
\end{bmatrix}.$$
Now applying $R_1\rightarrow R_1+R_2$, we get
\begin{align}\begin{bmatrix}
	\beta\delta^{q}+\beta^{q}\delta&0\\
	\beta^{q}\delta&\beta^{q}\delta^{q}
\end{bmatrix}
\begin{bmatrix}
	x\\x^{q}
\end{bmatrix}= D\label{4b}\end{align}
where,
$$D=
\begin{bmatrix}
	\delta^{q}\operatorname{Tr}_{q}^{q^2}(\beta y)+\delta^{q}\gamma\{(\operatorname{Tr}_{q}^{q^2}(\delta y))^2+(\operatorname{Tr}_{q}^{q^2}(\delta y))^3\}+\beta^{q}\operatorname{Tr}_{q}^{q^2}(\delta y)\\
	\beta^{q}\operatorname{Tr}_{q}^{q^2}(\delta y)
\end{bmatrix}.$$
From the equation \ref{4b}, we get
\begin{align*}
x=T^{-1}\circ f_4^{-1}\circ T(y)=&(\beta\delta^{q}+\beta^{q}\delta)^{-1}\{\delta^{q}\operatorname{Tr}_{q}^{q^2}(\beta y)+\delta^{q}\gamma(\operatorname{Tr}_{q}^{q^2}(\delta y))^2\\&+\delta^{q}\gamma(\operatorname{Tr}_{q}^{q^2}(\delta y))^3+\beta^{q}\operatorname{Tr}_{q}^{q^2}(\delta y)\}.
\end{align*}
Now taking $y=T^{-1}(x)$ in above equation and using Lemma \ref{lem3} , we can easily see that 
\begin{align*}
	T^{-1}\circ f_4^{-1}(x)=&(\beta\delta^{q}+\beta^{q}\delta)^{-1}\{\delta^{q}\operatorname{Tr}_{q}^{q^2}((1+\alpha)x)+\delta^{q}\gamma(\operatorname{Tr}_{q}^{q^2}(x))^2\\&+\delta^{q}\gamma(\operatorname{Tr}_{q}^{q^2}(x))^3+\beta^{q}\operatorname{Tr}_{q}^{q^2}(x)\}.
\end{align*}
We know that
\begin{align}
	f_4^{-1}(x)=&\operatorname{Tr}_{q}^{q^2}[\beta (T^{-1}\circ f_4^{-1}(x))]+\alpha\operatorname{Tr}_{q}^{q^2}[\delta (T^{-1}\circ f_4^{-1}(x))].\label{4c}
\end{align}
Next we find
\begin{align}
	\operatorname{Tr}_{q}^{q^2}[\beta (T^{-1}\circ f_4^{-1}(x))]=&\operatorname{Tr}_{q}^{q^2}[\beta (\beta\delta^{q}+\beta^{q}\delta)^{-1}\{\delta^{q}\operatorname{Tr}_{q}^{q^2}((1+\alpha)x)+\notag\\&\delta^{q}\gamma(\operatorname{Tr}_{q}^{q^2}(x))^2+\delta^{q}\gamma(\operatorname{Tr}_{q}^{q^2}(x))^3+\beta^{q}\operatorname{Tr}_{q}^{q^2}(x)\}]\notag\\
	=&\operatorname{Tr}_{q}^{q^2}[\beta\delta^{q}(\beta\delta^{q}+\beta^{q}\delta)^{-1}\operatorname{Tr}_{q}^{q^2}((1+\alpha)x)\notag\\&+\beta\delta^{q}\gamma(\beta\delta^{q}+\beta^{q}\delta)^{-1}(\operatorname{Tr}_{q}^{q^2}(x))^2\notag\\&+\beta\delta^{q}\gamma(\beta\delta^{q}+\beta^{q}\delta)^{-1}(\operatorname{Tr}_{q}^{q^2}(x))^3\notag\\&+\beta\beta^{q}(\beta\delta^{q}+\beta^{q}\delta)^{-1}\operatorname{Tr}_{q}^{q^2}(x)]\notag\\
	=&\operatorname{Tr}_{q}^{q^2}((1+\alpha)x)+\gamma(\operatorname{Tr}_{q}^{q^2}(x))^2+\gamma(\operatorname{Tr}_{q}^{q^2}(x))^3.\label{4d}
\end{align}
and
\begin{align}
	\operatorname{Tr}_{q}^{q^2}[\delta (T^{-1}\circ f_4^{-1}(x))]=&\operatorname{Tr}_{q}^{q^2}[\delta (\beta\delta^{q}+\beta^{q}\delta)^{-1}\{\delta^{q}\operatorname{Tr}_{q}^{q^2}((1+\alpha)x)+\notag\\&\delta^{q}\gamma(\operatorname{Tr}_{q}^{q^2}(x))^2+\delta^{q}\gamma(\operatorname{Tr}_{q}^{q^2}(x))^3+\beta^{q}\operatorname{Tr}_{q}^{q^2}(x)\}]\notag\\
	=&\operatorname{Tr}_{q}^{q^2}[\delta\delta^{q}(\beta\delta^{q}+\beta^{q}\delta)^{-1}\operatorname{Tr}_{q}^{q^2}((1+\alpha)x)\notag\\&+\delta\delta^{q}\gamma(\beta\delta^{q}+\beta^{q}\delta)^{-1}(\operatorname{Tr}_{q}^{q^2}(x))^2\notag\\&+\delta\delta^{q}\gamma(\beta\delta^{q}+\beta^{q}\delta)^{-1}(\operatorname{Tr}_{q}^{q^2}(x))^3\notag\\&+\delta\beta^{q}(\beta\delta^{q}+\beta^{q}\delta)^{-1}\operatorname{Tr}_{q}^{q^2}(x)]\notag\\
	=&\operatorname{Tr}_{q}^{q^2}(x).\label{4e}
\end{align}
Now using equations \ref{4d} and \ref{4e} in equation \ref{4c}, we get
\begin{align*}
	f_4^{-1}(x)=&\operatorname{Tr}_{q}^{q^2}((1+\alpha)x)+\gamma(\operatorname{Tr}_{q}^{q^2}(x))^2+\gamma(\operatorname{Tr}_{q}^{q^2}(x))^3+\alpha\operatorname{Tr}_{q}^{q^2}(x)\\
	=&x+\gamma(\operatorname{Tr}_{q}^{q^2}(x))^2+\gamma(\operatorname{Tr}_{q}^{q^2}(x))^3.
\end{align*}
Next we assume $\operatorname{Tr}(\gamma)=1$. We see from equation \ref{4a} that
\begin{align}
	f_4\circ U(x)=&\operatorname{Tr}_q^{q^2}(\beta x)+\gamma\{\operatorname{Tr}_q^{q^2}(\delta x)+\operatorname{Tr}_q^{q^2}(\delta x)^2+\operatorname{Tr}_q^{q^2}(\delta x)^3\}.\label{4f}
\end{align}
From the equation $f_4 \circ U(x)=U(y)$, we get the following system of equations
\begin{align*}
	\operatorname{Tr}_q^{q^2}(\beta x)&=\operatorname{Tr}_q^{q^2}(\beta y)\\
	\operatorname{Tr}_q^{q^2}(\delta x)+\operatorname{Tr}_q^{q^2}(\delta x)^2+\operatorname{Tr}_q^{q^2}(\delta x)^3&=\operatorname{Tr}_q^{q^2}(\delta y).
\end{align*}
We see that $\operatorname{Tr}_q^{q^2}(\delta x)+\operatorname{Tr}_q^{q^2}(\delta x)^2+\operatorname{Tr}_q^{q^2}(\delta x)^3=(\operatorname{Tr}_q^{q^2}(\delta x)+1)^3+1$.
Let $t$ be a positive integer satisfying $3t\equiv 1\mod{(2^m-1)}$. In view of this, we get the following equivalent system of equations
\begin{align}
	\operatorname{Tr}_q^{q^2}(\beta x)&=\operatorname{Tr}_q^{q^2}(\beta y)\label{4g}\\
	\operatorname{Tr}_q^{q^2}(\delta x)&=1+(\operatorname{Tr}_q^{q^2}(\delta y)+1)^{t}.\label{4h}
\end{align}
Equivalently in matrix form, we have
$$\begin{bmatrix}
	\beta&\beta^{q}\\
	\delta&\delta^{q}
\end{bmatrix}
\begin{bmatrix}
	x\\x^{q}
\end{bmatrix}=
\begin{bmatrix}
	\operatorname{Tr}_{q}^{q^2}(\beta y)\\
	(\operatorname{Tr}_q^{q^2}(\delta y)+1)^t+1
\end{bmatrix}.$$
Applying elementary row operations $R_1\rightarrow\delta^{q}R_1$ and $R_2\rightarrow\beta^{q}R_2$, we get
$$\begin{bmatrix}
	\beta\delta^{q}&\beta^{q}\delta^{q}\\
	\beta^{q}\delta&\beta^{q}\delta^{q}
\end{bmatrix}
\begin{bmatrix}
	x\\x^{q}
\end{bmatrix}=
\begin{bmatrix}
	\delta^{q}\operatorname{Tr}_{q}^{q^2}(\beta y)\\
	\beta^{q}\{(\operatorname{Tr}_q^{q^2}(\delta y)+1)^t+1\}
\end{bmatrix}.$$
Again applying $R_1\rightarrow R_1+R_2$, we get
\begin{align}\begin{bmatrix}
		\beta\delta^{q}+\beta^{q}\delta&0\\
		\beta^{q}\delta&\beta^{q}\delta^{q}
	\end{bmatrix}
	\begin{bmatrix}
		x\\x^{q}
	\end{bmatrix}=
	\begin{bmatrix}
		\delta^{q}\operatorname{Tr}_{q}^{q^2}(\beta y)+	\beta^{q}\{(\operatorname{Tr}_q^{q^2}(\delta y)+1)^t+1\}\\
		\beta^{q}\{(\operatorname{Tr}_q^{q^2}(\delta y)+1)^t+1\}
	\end{bmatrix}.\label{4j}\end{align}
From the equation \ref{4j}, we get
\begin{align*}
	x=&(\beta\delta^{q}+\beta^{q}\delta)^{-1}[\delta^{q}\operatorname{Tr}_{q}^{q^2}(\beta y)+\beta^{q}\{(\operatorname{Tr}_q^{q^2}(\delta y)+1)^t+1\}]
\end{align*}
or
\begin{align*}
	U^{-1}\circ f_4^{-1}\circ U(y)=&(\beta\delta^{q}+\beta^{q}\delta)^{-1}[\delta^{q}\operatorname{Tr}_{q}^{q^2}(\beta y)+\beta^{q}\{(\operatorname{Tr}_q^{q^2}(\delta y)+1)^t+1\}].
	\end{align*}
Substituting $y=U^{-1}(x)$ in above equation, we get
\begin{align*}
	U^{-1}\circ f_4^{-1}(x)=&(\beta\delta^{q}+\beta^{q}\delta)^{-1}[\delta^{q}\operatorname{Tr}_{q}^{q^2}(\beta U^{-1}(x))+\beta^{q}\{(\operatorname{Tr}_q^{q^2}(\delta U^{-1}(x))+1)^t+1\}].
\end{align*}
Now using lemma \ref{lem3}, we get
\begin{align*}
	U^{-1}\circ f_4^{-1}(x)=&(\beta\delta^{q}+\beta^{q}\delta)^{-1}[\delta^{q}\operatorname{Tr}_{q}^{q^2}((1+\gamma)x)+\beta^{q}\{(\operatorname{Tr}_{q}^{q^2}(x)+1)^t+1\}].
\end{align*}
Taking each side composition with $U$, we get
\begin{align*}
	f_4^{-1}(x)=&\operatorname{Tr}_{q}^{q^2}((1+\gamma)x)+\gamma[(\operatorname{Tr}_{q}^{q^2}(x)+1)^t+1].
\end{align*}
\end{proof}

In the next two theorems, we have removed the detailed proofs and
present only the key components necessary to continue the arguments.

\begin{thm}Let $q=2^m$, $m$ an odd number and $\gamma\in \mathbb{F}_{2^2}\setminus \mathbb{F}_2$. Then the compositional inverse of permutation polynomial 
	$f_5(x)=x+\gamma\operatorname{Tr}_{q}^{q^2}(x^2+x^{q+2})$ over $\mathbb{F}_{q^2}$ is
	$f_5^{-1}(x)=\operatorname{Tr}_q^{q^2}((1+\gamma)x)+\gamma[\{\operatorname{Tr}_q^{q^2}(x)+(\operatorname{Tr}_q^{q^2}((1+\gamma)x))^3+(\operatorname{Tr}_q^{q^2}((1+\gamma)x))^2+\operatorname{Tr}_q^{q^2}((1+\gamma)x)+1\}^t+\operatorname{Tr}_q^{q^2}((1+\gamma)x)+1]$, where $3t\equiv 1\mod{(2^m-1)}$.
\end{thm}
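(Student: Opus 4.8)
The plan is to adapt the conjugation technique used in the proofs of the theorems for $f_3$ and $f_4$ in the subcase $\operatorname{Tr}_{q}^{q^2}(\gamma)=1$; here this is the only case, since $\gamma\in\mathbb{F}_{2^2}\setminus\mathbb{F}_2$ forces $\gamma^2+\gamma+1=0$, hence $\gamma^2=\gamma+1$ and $\gamma^3=1$, and since $m$ is odd we have $q=2^m\equiv 2\pmod 3$, so $\gamma^{q}=\gamma^{2}=\gamma+1\neq\gamma$. Thus $\gamma\in\mathbb{F}_{q^2}\setminus\mathbb{F}_q$, $\operatorname{Tr}_{q}^{q^2}(\gamma)=\gamma+\gamma^{q}=1$, and $\{1,\gamma\}$ is a basis of $\mathbb{F}_{q^2}$ over $\mathbb{F}_q$. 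I would fix $\beta,\delta\in\mathbb{F}_{q^2}^{*}$ with $\beta\neq c\delta$ for every $c\in\mathbb{F}_q^{*}$ and set $U(x)=\operatorname{Tr}_{q}^{q^2}(\beta x)+\gamma\operatorname{Tr}_{q}^{q^2}(\delta x)$; because $\operatorname{Tr}_{q}^{q^2}(\gamma)=1$ this is exactly the map of Lemma \ref{lem2} with $\alpha$ replaced by $\gamma$, so $U$ is a permutation with $U^{-1}(x)=(\beta\delta^{q}+\delta\beta^{q})^{-1}\{\delta^{q}\operatorname{Tr}_{q}^{q^2}((1+\gamma)x)+\beta^{q}\operatorname{Tr}_{q}^{q^2}(x)\}$, and by Lemma \ref{lem3} we have $\operatorname{Tr}_{q}^{q^2}(\beta U^{-1}(x))=\operatorname{Tr}_{q}^{q^2}((1+\gamma)x)$ and $\operatorname{Tr}_{q}^{q^2}(\delta U^{-1}(x))=\operatorname{Tr}_{q}^{q^2}(x)$.

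The next step is to compute $f_5\circ U(x)$. Write $a=\operatorname{Tr}_{q}^{q^2}(\beta x)$ and $b=\operatorname{Tr}_{q}^{q^2}(\delta x)$, both in $\mathbb{F}_q$. Using $x^2+x^{q+2}=x^2(1+x^{q})$, the relations $(U(x))^{q}=a+\gamma^{q}b=a+(\gamma+1)b$ and $(U(x))^2=a^2+\gamma^2 b^2=a^2+(\gamma+1)b^2$, together with $\gamma^3=1$, $\operatorname{Tr}_{q}^{q^2}(\gamma^2)=(\operatorname{Tr}_{q}^{q^2}(\gamma))^2=1$, and $\operatorname{Tr}_{q}^{q^2}(c+\gamma c')=c'$ for $c,c'\in\mathbb{F}_q$, a direct expansion of $\operatorname{Tr}_{q}^{q^2}((U(x))^2(1+(U(x))^{q}))$ gives
\[
f_5\circ U(x)=a+\gamma\bigl(b+a^2 b+ab^2+b^2+b^3\bigr).
\]
Setting $f_5\circ U(x)=U(y)=\operatorname{Tr}_{q}^{q^2}(\beta y)+\gamma\operatorname{Tr}_{q}^{q^2}(\delta y)$ and comparing coefficients in the basis $\{1,\gamma\}$ yields $\operatorname{Tr}_{q}^{q^2}(\beta x)=\operatorname{Tr}_{q}^{q^2}(\beta y)$ and, with $a=\operatorname{Tr}_{q}^{q^2}(\beta y)$, the cubic equation $b^3+(a+1)b^2+(a+1)^2 b=\operatorname{Tr}_{q}^{q^2}(\delta y)$ for $b=\operatorname{Tr}_{q}^{q^2}(\delta x)$.

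The crux is the identity $b^3+(a+1)b^2+(a+1)^2 b=(b+a+1)^3+(a+1)^3$, valid in characteristic $2$, which turns the cubic equation into $(b+a+1)^3=\operatorname{Tr}_{q}^{q^2}(\delta y)+(\operatorname{Tr}_{q}^{q^2}(\beta y)+1)^3$; since $m$ is odd, $x\mapsto x^3$ is a bijection of $\mathbb{F}_q$ with inverse $x\mapsto x^{t}$ (Lemma 2.1), so $\operatorname{Tr}_{q}^{q^2}(\delta x)=\bigl(\operatorname{Tr}_{q}^{q^2}(\delta y)+(\operatorname{Tr}_{q}^{q^2}(\beta y)+1)^3\bigr)^{t}+\operatorname{Tr}_{q}^{q^2}(\beta y)+1$. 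The remainder is routine and parallels the earlier proofs: write the two equations for $\operatorname{Tr}_{q}^{q^2}(\beta x)$ and $\operatorname{Tr}_{q}^{q^2}(\delta x)$ as a $2\times 2$ system in $\begin{bmatrix}x\\x^{q}\end{bmatrix}$ with coefficient matrix $\begin{bmatrix}\beta&\beta^{q}\\\delta&\delta^{q}\end{bmatrix}$, apply the row operations $R_1\to\delta^{q}R_1$, $R_2\to\beta^{q}R_2$, $R_1\to R_1+R_2$ to solve for $x=(U^{-1}\circ f_5^{-1}\circ U)(y)$, substitute $y=U^{-1}(x)$, use Lemma \ref{lem3} to replace $\operatorname{Tr}_{q}^{q^2}(\beta U^{-1}(x))$ by $\operatorname{Tr}_{q}^{q^2}((1+\gamma)x)$ and $\operatorname{Tr}_{q}^{q^2}(\delta U^{-1}(x))$ by $\operatorname{Tr}_{q}^{q^2}(x)$, and finally recover $f_5^{-1}(x)=\operatorname{Tr}_{q}^{q^2}(\beta(U^{-1}\circ f_5^{-1})(x))+\gamma\operatorname{Tr}_{q}^{q^2}(\delta(U^{-1}\circ f_5^{-1})(x))$, simplifying via $\operatorname{Tr}_{q}^{q^2}(\beta\delta^{q})=\operatorname{Tr}_{q}^{q^2}(\delta\beta^{q})=\beta\delta^{q}+\beta^{q}\delta$ and $\operatorname{Tr}_{q}^{q^2}(\beta\beta^{q})=\operatorname{Tr}_{q}^{q^2}(\delta\delta^{q})=0$. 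This collapses to $f_5^{-1}(x)=\operatorname{Tr}_{q}^{q^2}((1+\gamma)x)+\gamma[(\operatorname{Tr}_{q}^{q^2}(x)+(\operatorname{Tr}_{q}^{q^2}((1+\gamma)x)+1)^3)^{t}+\operatorname{Tr}_{q}^{q^2}((1+\gamma)x)+1]$, and expanding $(\operatorname{Tr}_{q}^{q^2}((1+\gamma)x)+1)^3=(\operatorname{Tr}_{q}^{q^2}((1+\gamma)x))^3+(\operatorname{Tr}_{q}^{q^2}((1+\gamma)x))^2+\operatorname{Tr}_{q}^{q^2}((1+\gamma)x)+1$ gives the asserted formula. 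I expect the only genuine difficulty to be the bookkeeping in the expansion of $f_5\circ U(x)$ — in particular keeping the powers of $\gamma$ straight (that $\gamma^3=1$, so $\gamma^4=\gamma$ in the $(U(x))^{q+2}$ term) — and spotting the cubic identity above; once these are in hand, each remaining line is a mechanical repetition of the template already used for $f_1$ through $f_4$.
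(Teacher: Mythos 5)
Your proposal is correct and follows essentially the same route as the paper: conjugate by $U(x)=\operatorname{Tr}_{q}^{q^2}(\beta x)+\gamma\operatorname{Tr}_{q}^{q^2}(\delta x)$, expand $f_5\circ U$ in the basis $\{1,\gamma\}$ to get exactly the cubic $b+a^2b+ab^2+b^2+b^3$, reduce it via the same completion-of-the-cube (your identity $b^3+(a+1)b^2+(a+1)^2b=(b+a+1)^3+(a+1)^3$ is just the paper's substitution $z=a+b$ with $z^3+z^2+z=(z+1)^3+1$ written in one step), invert with $t$, and unwind through the matrix system and Lemma \ref{lem3}. No gaps; the computation of $f_5\circ U(x)$ and the final formula both match the paper.
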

\begin{proof}
	Since $\gamma\in \mathbb{F}_{2^2}\setminus \mathbb{F}_2$, therefore we have $\gamma^2+\gamma+1=0$, and $\gamma^{2^{m}}=\gamma+1$. We note that for an odd integer $m$, the polynomial $x^2+x+1$ is irreducible over finite fields $\mathbb{F}_{2^{m}}$. Therefore, $\gamma \in \mathbb{F}_{2^{2m}}\setminus \mathbb{F}_{2^m}$, and $\{1, \gamma\}$ is a basis of $\mathbb{F}_{2^{2m}}$ over $\mathbb{F}_{2^{m}}$.
Now we have
	\begin{align}
		f_5\circ U(x) =&\operatorname{Tr}_{q}^{q^2}(\beta x)+\gamma\operatorname{Tr}_{q}^{q^2}(\delta x)+\gamma\operatorname{Tr}_{q}^{q^2}[\{\operatorname{Tr}_{q}^{q^2}(\beta x)+\gamma\operatorname{Tr}_{q}^{q^2}(\delta x)\}^2\notag\\&\{1+\operatorname{Tr}_{q}^{q^2}(\beta x)+\gamma\operatorname{Tr}_{q}^{q^2}(\delta x)+\operatorname{Tr}_{q}^{q^2}(\delta x)\}]\notag\\
		=&\operatorname{Tr}_{q}^{q^2}(\beta x)+\gamma\operatorname{Tr}_{q}^{q^2}(\delta x)+\gamma\operatorname{Tr}_{q}^{q^2}[\{(\operatorname{Tr}_{q}^{q^2}(\beta x))^2+\gamma(\operatorname{Tr}_{q}^{q^2}(\delta x))^2\notag\\&+(\operatorname{Tr}_{q}^{q^2}(\delta x))^2\}\{1+\operatorname{Tr}_{q}^{q^2}(\beta x)+\gamma\operatorname{Tr}_{q}^{q^2}(\delta x)+\operatorname{Tr}_{q}^{q^2}(\delta x)\}]\notag\\
		=&\operatorname{Tr}_{q}^{q^2}(\beta x)+\gamma\operatorname{Tr}_{q}^{q^2}(\delta x)+\gamma\operatorname{Tr}_{q}^{q^2}(\delta x)(\operatorname{Tr}_{q}^{q^2}(\beta x))^2+\gamma(\operatorname{Tr}_{q}^{q^2}(\delta x))^2\notag\\&+\gamma(\operatorname{Tr}_{q}^{q^2}(\delta x))^2\operatorname{Tr}_{q}^{q^2}(\beta x)+\gamma(\operatorname{Tr}_{q}^{q^2}(\delta x))^3. \label{5a}
	\end{align}
	Now assuming $f_5\circ U(x)=U(y)=\operatorname{Tr}_{q}^{q^2}(\beta y)+\gamma\operatorname{Tr}_{q}^{q^2}(\delta y)$, we get the following system of equations
	
	\begin{align}
		\operatorname{Tr}_q^{q^2}(\beta y)=&\operatorname{Tr}_q^{q^2}(\beta x)\label{5b}\\
		\operatorname{Tr}_q^{q^2}(\delta y)=&\operatorname{Tr}_q^{q^2}(\delta x)+\operatorname{Tr}_q^{q^2}(\delta x)(\operatorname{Tr}_q^{q^2}(\beta y))^2+(\operatorname{Tr}_q^{q^2}(\delta x))^2\notag\\&+(\operatorname{Tr}_q^{q^2}(\delta x))^2\operatorname{Tr}_q^{q^2}(\beta y)+(\operatorname{Tr}_q^{q^2}(\delta x))^3.\label{5c}
	\end{align}
	Equation \ref{5c} can be rewritten as
	\begin{align}
		(\operatorname{Tr}_q^{q^2}(\beta y)+\operatorname{Tr}_q^{q^2}(\delta x))^3+(\operatorname{Tr}_q^{q^2}(\delta x))^2+\operatorname{Tr}_q^{q^2}(\delta x)=\operatorname{Tr}_q^{q^2}(\delta y)+(\operatorname{Tr}_q^{q^2}(\beta y))^3.\notag
	\end{align}
	Taking $\operatorname{Tr}_q^{q^2}(\beta y)+\operatorname{Tr}_q^{q^2}(\delta x)=z$ in above equation, we get
	$$z^3+z^2+z=(z+1)^3+1=\operatorname{Tr}_q^{q^2}(\delta y)+(\operatorname{Tr}_q^{q^2}(\beta y))^3+(\operatorname{Tr}_q^{q^2}(\beta y))^2+\operatorname{Tr}_q^{q^2}(\beta y).$$
	Equivalently, we have
	 $$z=\large(\operatorname{Tr}_q^{q^2}(\delta y)+(\operatorname{Tr}_q^{q^2}(\beta y))^3+(\operatorname{Tr}_q^{q^2}(\beta y))^2+\operatorname{Tr}_q^{q^2}(\beta y)+1\large)^t+1.$$
	This implies that
	\begin{align}
		\operatorname{Tr}_q^{q^2}(\delta x)=&(\operatorname{Tr}_q^{q^2}(\delta y)+(\operatorname{Tr}_q^{q^2}(\beta y))^3+(\operatorname{Tr}_q^{q^2}(\beta y))^2+\operatorname{Tr}_q^{q^2}(\beta y)+1)^t\notag\\&+\operatorname{Tr}_q^{q^2}(\beta y)+1.\label{5d}
	\end{align}
	We write the equations \ref{5b} and \ref{5d} in matrix form 
	\begin{align}\label{5e}
		\begin{bmatrix}
		\beta&\beta^{q}\\
		\delta&\delta^{q}
	\end{bmatrix}
	\begin{bmatrix}
		x\\x^{q}
	\end{bmatrix}=
	\begin{bmatrix}
		\operatorname{Tr}_{q}^{q^2}(\beta y)\\
		N
	\end{bmatrix}\end{align}where,$$N=	\{\operatorname{Tr}_q^{q^2}(\delta y)+(\operatorname{Tr}_q^{q^2}(\beta y))^3+(\operatorname{Tr}_q^{q^2}(\beta y))^2+\operatorname{Tr}_q^{q^2}(\beta y)+1\}^t+\operatorname{Tr}_q^{q^2}(\beta y)+1.$$

 Applying elementary row operations in equation \ref{5e}, we can easily see that
	\begin{align}
		x=&(\beta\delta^q+\delta\beta^q)^{-1}[\delta^q	\operatorname{Tr}_{q}^{q^2}(\beta y)+\beta^q\{\operatorname{Tr}_q^{q^2}(\delta y)+(\operatorname{Tr}_q^{q^2}(\beta y))^3+(\operatorname{Tr}_q^{q^2}(\beta y))^2\notag\\&+\operatorname{Tr}_q^{q^2}(\beta y)+1\}^t+\beta^q\operatorname{Tr}_q^{q^2}(\beta y)+\beta^q].\label{5e}
	\end{align}
The rest of the proof is omitted.
\end{proof}

\begin{thm}Let $q=2^m$, $m$ an odd number, and $\gamma\in \mathbb{F}_{2^2}\setminus\mathbb{F}_2$. Then the compositional inverse of permutation polynomial 
	$f_6(x)=x+\gamma\operatorname{Tr}_{q}^{q^2}(x+x^{q+2})$ over $\mathbb{F}_{q^2}$ is
	$f_6^{-1}(x)=\operatorname{Tr}_q^{q^2}((1+\gamma)x)+\gamma[\operatorname{Tr}_q^{q^2}((1+\gamma)x)+\{\operatorname{Tr}_q^{q^2}(x)+(\operatorname{Tr}_q^{q^2}((1+\gamma)x))^3\}^t]$, where $3t\equiv 1\mod{(2^m-1)}$.
\end{thm}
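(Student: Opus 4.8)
The plan is to mimic the proof of Theorem~3.5 almost verbatim, since $f_6(x)=x+\gamma\operatorname{Tr}_q^{q^2}(x+x^{q+2})$ differs from $f_5$ only in the replacement of the term $x^2$ by $x$ inside the trace argument. As before, I would exploit that $\gamma\in\mathbb{F}_{2^2}\setminus\mathbb{F}_2$ gives $\gamma^2+\gamma+1=0$ and $\gamma^q=\gamma+1$, and that $\{1,\gamma\}$ is a basis of $\mathbb{F}_{q^2}$ over $\mathbb{F}_q$ when $m$ is odd. Taking $U(x)=\operatorname{Tr}_q^{q^2}(\beta x)+\gamma\operatorname{Tr}_q^{q^2}(\delta x)$ as in Lemma~\ref{lem2} (with $\alpha$ replaced by $\gamma$), I would first compute $f_6\circ U(x)$, using $\{\operatorname{Tr}_q^{q^2}(\beta x)+\gamma\operatorname{Tr}_q^{q^2}(\delta x)\}^{q+2}=\{\operatorname{Tr}_q^{q^2}(\beta x)+\gamma\operatorname{Tr}_q^{q^2}(\delta x)\}^2\{1+\operatorname{Tr}_q^{q^2}(\beta x)+\gamma\operatorname{Tr}_q^{q^2}(\delta x)+\operatorname{Tr}_q^{q^2}(\delta x)\}$ together with $\operatorname{Tr}_q^{q^2}$ acting as identity on $\mathbb{F}_q$-valued terms and $\operatorname{Tr}_q^{q^2}(\gamma)=1$, to collapse $f_6\circ U(x)$ into an expression of the form $\operatorname{Tr}_q^{q^2}(\beta x)+\gamma\,(\text{polynomial in }\operatorname{Tr}_q^{q^2}(\beta x),\operatorname{Tr}_q^{q^2}(\delta x))$.

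Next I would set $f_6\circ U(x)=U(y)$ and compare the $\{1,\gamma\}$-components to get a system: one equation $\operatorname{Tr}_q^{q^2}(\beta x)=\operatorname{Tr}_q^{q^2}(\beta y)$ and one nonlinear equation relating $\operatorname{Tr}_q^{q^2}(\delta x)$, $\operatorname{Tr}_q^{q^2}(\beta y)$, $\operatorname{Tr}_q^{q^2}(\delta y)$. Using the first equation to substitute $\operatorname{Tr}_q^{q^2}(\beta y)$ for $\operatorname{Tr}_q^{q^2}(\beta x)$ wherever it appears, I would aim to bring the second equation into the shape $w^3=(\text{known quantity})$ for a suitable substituted variable $w$ (expectedly $w=\operatorname{Tr}_q^{q^2}(\delta x)+(\text{correction involving }\operatorname{Tr}_q^{q^2}(\beta y)))$, and then invert the cube via Lemma~2.1, i.e.\ raise to the power $t$ with $3t\equiv 1\pmod{2^m-1}$. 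This yields $\operatorname{Tr}_q^{q^2}(\delta x)$ as an explicit function of $\operatorname{Tr}_q^{q^2}(\beta y)$ and $\operatorname{Tr}_q^{q^2}(\delta y)$.

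Having the two scalar traces $\operatorname{Tr}_q^{q^2}(\beta x)$ and $\operatorname{Tr}_q^{q^2}(\delta x)$ expressed in $y$, I would assemble the matrix equation $\begin{bmatrix}\beta&\beta^q\\\delta&\delta^q\end{bmatrix}\begin{bmatrix}x\\x^q\end{bmatrix}=\begin{bmatrix}\operatorname{Tr}_q^{q^2}(\beta y)\\ \text{RHS}\end{bmatrix}$, note invertibility of the coefficient matrix by Lemma~\ref{lem1} (since $\beta\ne c\delta$), and solve for $x=U^{-1}\circ f_6^{-1}\circ U(y)$ by the row operations $R_1\to\delta^q R_1$, $R_2\to\beta^q R_2$, $R_1\to R_1+R_2$, exactly as in the earlier theorems. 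Then substitute $y=U^{-1}(x)$, apply Lemma~\ref{lem2} (in the form of Lemma~\ref{lem3} with $\alpha$ replaced by $\gamma$, i.e.\ $\operatorname{Tr}_q^{q^2}(\beta U^{-1}(x))=\operatorname{Tr}_q^{q^2}((1+\gamma)x)$ and $\operatorname{Tr}_q^{q^2}(\delta U^{-1}(x))=\operatorname{Tr}_q^{q^2}(x)$), and finally compose with $U$ on the left using $f_6^{-1}(x)=\operatorname{Tr}_q^{q^2}(\beta\,U^{-1}f_6^{-1}(x))+\gamma\operatorname{Tr}_q^{q^2}(\delta\,U^{-1}f_6^{-1}(x))$, so that the $\beta$-dependent and $\delta$-dependent terms recombine into the claimed closed form $f_6^{-1}(x)=\operatorname{Tr}_q^{q^2}((1+\gamma)x)+\gamma[\operatorname{Tr}_q^{q^2}((1+\gamma)x)+\{\operatorname{Tr}_q^{q^2}(x)+(\operatorname{Tr}_q^{q^2}((1+\gamma)x))^3\}^t]$.

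The main obstacle, as in Theorem~3.5, is the bookkeeping in the nonlinear step: identifying the correct substitution $w$ that turns the second equation into a pure cube, and carefully tracking the additive correction terms (which are $\mathbb{F}_q$-valued, so Frobenius-fixed) so that after raising to the $t$-th power and undoing the substitution the expression matches the stated formula. Everything else — the $\{1,\gamma\}$-component comparison, the matrix inversion, and the two applications of Lemma~\ref{lem3} — is routine and structurally identical to the proofs of Theorems~3.3 and~3.5.
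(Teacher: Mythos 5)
Your proposal is correct and follows exactly the route the paper intends: the paper omits the proof of this theorem, stating only that it is similar to that of the preceding theorem on $f_5$, and your plan (compute $f_6\circ U$, compare $\{1,\gamma\}$-components, reduce the nonlinear equation to a cube via the substitution $w=\operatorname{Tr}_q^{q^2}(\delta x)+\operatorname{Tr}_q^{q^2}(\beta y)$, invert with the exponent $t$, then solve the matrix system and apply Lemma \ref{lem3}) is precisely that argument. The one step you leave as an anticipated "obstacle" in fact works out cleanly here — the second equation becomes $(\operatorname{Tr}_q^{q^2}(\delta x)+\operatorname{Tr}_q^{q^2}(\beta y))^3=\operatorname{Tr}_q^{q^2}(\delta y)+(\operatornamename{Tr}_q^{q^2}(\beta y))^3$ with no extra additive shifts, which is even simpler than the $f_5$ case and yields the stated formula.
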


\begin{proof}
We can easily see that
	\begin{align}
		f_6\circ U(x) 
		=& \operatorname{Tr}_q^{q^2}(\beta x)+\gamma[\operatorname{Tr}_q^{q^2}(\beta x)(\operatorname{Tr}_q^{q^2}(\delta x))^2+\operatorname{Tr}_q^{q^2}(\delta x)(\operatorname{Tr}_q^{q^2}(\beta x))^2\notag \\
		&+(\operatorname{Tr}_q^{q^2}(\delta x))^3].\label{6a}
	\end{align}
	From the equation $f\circ U(x)=U(y)=\operatorname{Tr}_{q}^{q^2}(\beta y)+\gamma\operatorname{Tr}_{q}^{q^2}(\delta y)$, we get the following system of equations
	\begin{align}
	\operatorname{Tr}_q^{q^2}(\beta y)=&\operatorname{Tr}_q^{q^2}(\beta x)\label{6b}\\
	\operatorname{Tr}_q^{q^2}(\delta y)=&\operatorname{Tr}_q^{q^2}(\beta y)(\operatorname{Tr}_q^{q^2}(\delta x))^2+(\operatorname{Tr}_q^{q^2}(\beta y))^2\operatorname{Tr}_q^{q^2}(\delta x)+(\operatorname{Tr}_q^{q^2}(\delta x))^3.\label{6c}
	\end{align}
 The equation \ref{6c} can be rewritten as follows 
	$$\operatorname{Tr}_q^{q^2}(\delta y)=(\operatorname{Tr}_q^{q^2}(\beta y)+\operatorname{Tr}_q^{q^2}(\delta x))^3+(\operatorname{Tr}_q^{q^2}(\beta y))^3.$$
	Or equivalently we have
	\begin{align}
		\operatorname{Tr}_q^{q^2}(\delta x)=&(\operatorname{Tr}_q^{q^2}(\delta y)+(\operatorname{Tr}_q^{q^2}(\beta y))^3)^t+\operatorname{Tr}_q^{q^2}(\beta y).\label{6d}
	\end{align}
	We write the equations \ref{6b} and \ref{6d} in matrix form 
	\begin{align}
		\begin{bmatrix}
		\beta&\beta^{q}\\
		\delta&\delta^{q}
	\end{bmatrix}
	\begin{bmatrix}
		x\\x^{q}
	\end{bmatrix}=
	\begin{bmatrix}
		\operatorname{Tr}_{q}^{q^2}(\beta y)\\
		(\operatorname{Tr}_q^{q^2}(\delta y)+(\operatorname{Tr}_q^{q^2}(\beta y))^3)^t+\operatorname{Tr}_q^{q^2}(\beta y)
	\end{bmatrix}.
\end{align}
On solving the above equation, we can easily get 
	\begin{align}
		x=&(\beta\delta^q+\delta\beta^q)^{-1}[\delta^q\operatorname{Tr}_q^{q^2}(\beta y)+\beta^q(\operatorname{Tr}_q^{q^2}(\delta y)+(\operatorname{Tr}_q^{q^2}(\beta y))^3)^t+\beta^q\operatorname{Tr}_q^{q^2}(\beta y)].\label{6e}
	\end{align}
The rest of the proof is similar and has been omitted.

\end{proof}

\end{document}